\newtheorem*{Thm*}{Theorem}
\newtheorem{Thm}{Theorem}
\newtheorem{Cor}[Thm]{Corollary}
\newtheorem{Prop}[Thm]{Proposition}
\newtheorem{Lemma}[Thm]{Lemma}
\theoremstyle{definition}
\newtheorem{Defn}[Thm]{Definition}
\newtheorem{Notation}[Thm]{Notation}
\newtheorem{Remark}[Thm]{Remark}
\newtheorem{Example}[Thm]{Example}
\newtheorem{Construction}[Thm]{Construction}
\newcommand{\abs}[1]{\left\vert#1\right\vert}
\newcommand{\set}[1]{\left\{#1\right\}}
\newcommand{\mf}[1]{\mathbb{#1}}
\newcommand{\mc}[1]{\mathcal{#1}}
\newcommand{\mb}[1]{\mathbf{#1}}
\newcommand{\ip}[2]{\left \langle{#1},{#2} \right \rangle}
\newcommand{\norm}[1]{\left \|{#1} \right \|}
\DeclareMathOperator{\Span}{\mathrm{Span}}
\DeclareMathOperator{\cNC}{\widetilde{\mathrm{NC}}}
\newcommand{\br}{\medskip\noindent}
\title{Some Fock spaces with depth two action}
\author{Michael Anshelevich, Jacob Mashburn}
\thanks{This work was supported in part by a Simons Foundation Collaboration Grant.}
\address{Department of Mathematics, Texas A\&M University, College Station, TX 77843-3368}
\email{manshel@math.tamu.edu, mashburn@math.tamu.edu}
\subjclass[2020]{Primary 46L54; Secondary 81S25}
\date{\today}
\begin{document}

\begin{abstract}
The subject of this paper are operators represented on a Fock space which act only on the two leading components of the tensor. We unify the constructions from \cite{AnsFree-Meixner,Lenczewski-Salapata-Kesten,Bozejko-Lytvynov-Meixner-I,Bozejko-Lytvynov-Meixner-II}, and extend a number of results from these papers to our more general setting. The results include the quadratic relation satisfied by (the kernel of) the free cumulant generating function, the resolvent form of the generating function for the Wick polynomials, and classification results for the case when the vacuum state on the operator algebra is tracial. We handle the generating functions in infinitely many variables by considering their matrix-valued versions.
\end{abstract}

\maketitle



\section{Introduction}

Symmetric, anti-symmetric, and full Fock spaces have been studied for almost a century. They have appeared in quantum mechanics and the theory of operator algebras; in combinatorics, representation theory, and the study of orthogonal polynomials; and in other fields. Over the last few decades, there has been a proliferation of other Fock spaces, with accompanying structures which to a greater or lesser degree share the properties of the classical ones. Such structures include commutation relations; Gaussian-type and Poisson-type operators; Wick products and orthogonal polynomials; It\^o calculus, etc. Among these numerous constructions, we will highlight three, all of which appeared roughly at the same time a decade ago. While quite different, they all share a common feature, which one might call ``depth two action'' or ``nearest neighbor coupling''. In this paper we study a more general construction extending the three above, which retains this feature.

To be specific, let $\mc{B}$ be a unital $\ast$-algebra, with a positive faithful linear functional $\phi$, and form the algebraic Fock space $\mc{F}_{alg}(\mc{B}) = \bigoplus_{n=0}^\infty \mc{B}^{\otimes n}$. For each $b \in \mc{B}$, on this space we will define operators $a^+(b), a^-(b), a^0(b)$ and $X(b) = a^+(b) + a^0(b) + a^-(b)$. Here the creation operator $a^+(b)$ is defined in the usual way, but the annihilation operator acts on simple tensors as
\[
a^-(b) (u_1 \otimes \ldots \otimes u_n) = (\gamma + \phi)[b u_1] u_2 \otimes \ldots \otimes u_n,
\]
where $\gamma + \phi \mathbf{1}_{\mc{B}}$ is some completely positive map. Note that this operator couples together the first two components of the tensor. This should be compared with, on the one hand, the free Fock space, where the annihilation operator acts only on the first component of the tensor; and, on the other hand, with the $q$-Fock space, where its action involves all components of the tensor. Note also that if $\gamma = 0$, this is just the standard free annihilation operator on the full Fock space $\mc{F}(L^2(\mc{B}, \phi))$. The preservation operator has the form
\[
a^0(b) (u_1 \otimes \ldots \otimes u_n) = \Lambda(b) u_1 \otimes u_2 \otimes \ldots \otimes u_n
\]
or more generally
\[
a^0(b) (u_1 \otimes \ldots \otimes u_n) = \Lambda(b \otimes u_1) \otimes u_2 \otimes \ldots \otimes u_n
\]
for a map $\Lambda$ satisfying a symmetry condition, and acts only on the first component of the tensor.

Before proceeding further, we now outline the three earlier constructions mentioned above. We only list some of the results about these constructions; each of the papers below contains other important results not pertinent to this introduction. It can easily be seen that these constructions all fit in the setting of the preceding paragraph.

In \cite{Lenczewski-Salapata-Kesten}, Lenczewski and Sa{\l}apata constructed a deformation of the free Fock space of $L^2(\mf{R}_+)$ with the annihilation operator of the form
\[
a^-(f) g(x_1, \ldots, x_{n}) = \int f(x_1) w(x_1, x_2) g(x_1, \ldots, x_{n}) \,dx_1
\]
for a specific function $w(s,t)$ of two variables. They studied combinatorial properties of operators $X(f)$ on this space, in particular showing that they have the Kesten distribution. In \cite{Bozejko-Lytvynov-Meixner-I,Bozejko-Lytvynov-Meixner-II}, Bo{\.z}ejko and Lytvynov constructed, in the setting of standard triples, a Fock space based on the annihilation operator
\[
a^-(f) g(x_1, \ldots, x_{n}) = \int f(x_1) \eta(x_1) g(x_1, \ldots, x_{n}) \,dx_1
\]
for some function $\eta$ on $\mf{R}$. They studied general (L{\'e}vy-type) processes on such a Fock space. Perhaps most importantly, they gave a characterization among these, by a certain continuity property, of what is natural to call the \emph{Free Meixner class}. The paper also contains several formulas for free cumulants of the distributions of such processes. In the second paper, they constructed a generating function for a system of polynomials (in infinitely many variables) associated to such a process in terms of certain operator-valued functions. Finally, in \cite{AnsFree-Meixner}, the first author studied the free Meixner states. Here the Fock space was based on a finite-dimensional space $\mf{R}^d$, and the annihilation operator is
\[
a^-_i (e_{u(1)} \otimes \ldots \otimes e_{u(n)}) = C_{u(1), u(2)} \ip{e_i}{e_{u(1)}} e_{u(2)} \otimes \ldots e_{u(n)}.
\]
The key results involved equations satisfied by the free cumulant generating function and the generating function for the Wick products (which can be identified with monic orthogonal polynomials in $d$ variables), as well as the constructions in which the vacuum state is tracial on the algebra of these polynomials.

Motivated by these three paper, our primary interest in this article is in
\begin{itemize}
\item
The joint distributions of $\set{X(u) : u \in \mc{B}}$, typically expressed in terms of appropriate cumulants.
\item
The Wick polynomials, which are polynomials in $\set{X(u) : u \in \mc{B}}$ such that \\
$W(u_1 \otimes u_2 \otimes \ldots \otimes u_n) \Omega = u_1 \otimes u_2 \otimes \ldots \otimes u_n$.
\item
The algebras generated by $\set{X(u) : u \in \mc{B}}$.
\end{itemize}

Free and Boolean cumulant generating functions for free Meixner families \cite{Boz-Bryc,AnsFree-Meixner} satisfy second-order equations. Depth two action on the Fock space results in such equations being satisfied not by the scalar-valued free cumulant generating function $R(u)$ itself, but by its $\mc{B}$-valued ``kernel'' $R'(u)$ with $R(u) = \phi[u R'(u) u]$. In fact,
\begin{equation}
\label{Eq:Cumulant-GF-Introduction}
	R'(u) v = v + R'(u) \gamma\left[ u R'(u) u \right] v + R'(u) \Lambda(u \otimes v).
	\end{equation}
This result easily generalizes to a finite family of variables $\set{u_i}_{i=1}^d$. To make sense of a generating function for joint free cumulants of infinitely many variables $\set{u_i}_{i=1}^\infty$, we take an approach different from \cite{Bozejko-Lytvynov-Meixner-II}. We form an infinite matrix which contains all the information about joint free cumulants of $\set{u_i}_{i=1}^\infty$, and still satisfies (an appropriate version of) equation~\eqref{Eq:Cumulant-GF-Introduction}. In the case when $\Lambda(u \otimes v) = \Lambda(u) v$, for $\set{u_i}_{i=1}^\infty$ uniformly small, this matrix corresponds to a genuine bounded operator. The analysis is similar in style to, but different from, computations with fully matricial free cumulants \cite{Popa-Vinnikov-NC-functions}.

We perform a similar analysis for the joint generating function of Wick polynomials. It can be interpreted as an infinite matrix, and under appropriate assumptions as a bounded operator. As in \cite{AnsFree-Meixner,Bozejko-Lytvynov-Meixner-II}, it has a resolvent-type form
\[
W(u) = (B(u) - X(u))^{-1}(B(u) - \phi[u^2]),
\]
where $B(u) = 1 + \Lambda(u) + (\gamma + \phi)[u^2]$. See Section~\ref{Section:GF}.

Finally, we investigate the situation when the vacuum state on the algebra generated by all operators $\set{X(u) : u \in \mc{B}}$ is tracial. The depth two nature of the action allows us to write down explicit conditions on $\phi, \gamma, \Lambda$ which guarantee this. In the case when $\gamma = 0$, the Fock space is the full Fock space, but the circular operators $X(u)$ are deformed by a non-trivial $\Lambda$. We show that one can always use $\Lambda$ to define a new multiplication on $\mc{B}$, so that the representation splits into a semicircular and a free compound Poisson parts. More generally, if $\gamma[u] = \eta u$ for $\eta$ central (related to the construction from \cite{Bozejko-Lytvynov-Meixner-I}) then one has a similar decomposition, but with the third component on which $\Lambda(u \otimes v) = \lambda u v$ for $\lambda$ central.

The paper \cite{AnsFree-Meixner} contained another example of a ``depth two action'' algebra with a tracial vacuum state, which naturally corresponded to the free multinomial distribution. This example also generalizes to our setting here. It is described in a forthcoming paper \cite{Ans-Mashburn-Centered}.


The paper is organized as follows. After the Introduction, in Section~\ref{Section:Construction} we present the main construction. In Section~\ref{Section:Cumulants}, we prove formulas for joint moments, and Boolean and free cumulants, of the operators $\set{X(u) : u \in \mc{B}}$. We also compare and contrast these formulas with the operator-valued results of \cite{Ans-Williams-Jacobi}. In particular, unlike in \cite{Ans-Williams-Jacobi}, the inner product in this paper is scalar-valued rather than $\mc{B}$-valued. In Section~\ref{Section:GF}, we discuss Wick polynomials, and matricial generating functions for them and for the free cumulants. In Section~\ref{Section:Norms}, we provide conditions under which operators $X(u)$, as well as various generating functions, are bounded. In Section~\ref{Section:Trace} we derive the conditions for the vacuum state to be tracial. The results in Sections~\ref{Section:GF} and \ref{Section:Norms} are proven in the setting of $\Lambda(u \otimes v) = \Lambda(u) v$; in contrast, the results in Section~\ref{Section:Trace} are of main interest for general $\Lambda$. Finally, in Section~\ref{Sec:Examples}, we describe in detail the examples which motivated this paper, as well as their generalizations covered by our construction. We also prove a representation theorem under the assumption that the vacuum state is tracial.

\section{The construction}
\label{Section:Construction}

\begin{Defn}
Let $\mc{B}$ and $\mc{C}$ be $\ast$-algebras. An element $b \in \mc{B}$ is positive if $b = \sum_{i=1}^k u_i^\ast u_i$ for some $k$ and $u_i \in \mc{B}$. A map $T: \mc{B} \rightarrow \mc{C}$ is positive if for each $u \in \ \mc{B}$, $T(u^\ast u)$ is positive in $\mc{C}$. It is faithful if $T(u^\ast u) = 0$ only for $u = 0$. It is completely positive if for each $n$, the map
\begin{equation*}
T_n : M_n(\mc{B}) \rightarrow M_n(\mc{C}), \,\,\, T_n\left( [a_{ij}]_{i,j=1}^n \right) = [T(a_{ij})]_{i,j=1}^n
\end{equation*}
is positive, where we use the usual $\ast$-structure on $M_n(\mc{B})$.
\end{Defn}

\begin{Construction}
\label{Construction:gamma}
Let $\mc{B}$ be a unital *-algebra, equipped with star-linear maps $\phi : \mc{B} \rightarrow \mf{C}$, $\gamma : \mc{B} \rightarrow \mc{B}$, and $\Lambda: \mc{B} \otimes_{\text{alg}} \mc{B} \rightarrow \mc{B}$ such that $\phi$ is positive and faithful, $\gamma + \phi = \gamma + \phi \mb{1}_{\mc{B}}$ is completely positive (semi-definite), and $\Lambda$ satisfies
\begin{equation}
\label{Eq:a0-symmetric}
\phi[v^\ast \Lambda(b \otimes u)] = \phi[\Lambda(b^\ast \otimes v)^\ast u], \quad \gamma[v^\ast \Lambda(b \otimes u)] = \gamma[\Lambda(b^\ast \otimes v)^\ast u],
\end{equation}
On the algebraic Fock space,
\[
\mc{F}_{alg}(\mc{B}) = \mf{C} \Omega \oplus \bigoplus_{n=1}^\infty \mc{B}^{\otimes n},
\]
define the inner product by the linear extension of
\begin{multline}
\label{Eq:Inner-product}
\ip{u_1 \otimes \ldots \otimes u_n}{v_1 \otimes \ldots \otimes v_k}_{\gamma,\phi} \\
= \delta_{n=k} \phi \left[ v_n^\ast (\gamma + \phi)[v_{n-1}^\ast (\gamma + \phi)[ \ldots (\gamma + \phi)[ v_1^\ast u_1] \ldots ] u_{n-1}] u_n \right].
\end{multline}
This inner product is positive semi-definite but not, in general, positive definite. Denote by $\mc{F}_{\gamma, \phi}(\mc{B})$ the completion of the quotient  $\mc{F}_{alg}(\mc{B}) / \mc{N}$ by the subspace $\mc{N}$ of elements of zero seminorm. Next, for each $b \in \mc{B}$, consider operators on $\mc{F}_{alg}(\mc{B})$
\[
a^+(b) (u_1 \otimes \ldots \otimes u_n) = b \otimes u_1 \otimes \ldots \otimes u_n,
\]
\[
a^-(b) (u_1 \otimes \ldots \otimes u_n) = (\gamma + \phi)[b u_1] u_2 \otimes \ldots \otimes u_n,
\]
\[
a^-(b) (u_1) = \phi[b u_1] \Omega,
\]
\[
a^0(b) (u_1 \otimes \ldots \otimes u_n) = \Lambda(b \otimes u_1) \otimes  u_2 \otimes \ldots \otimes u_n,
\]
\[
a^-(b) (\Omega) = a^0(b)(\Omega) = 0,
\]
and
\[
X(b) = a^+(b) + a^-(b) + a^0(b).
\]
Several of the results below address the questions of when these operators are defined on $\mc{F}_{alg}(\mc{B}) / \mc{N}$ or on $\mc{F}_{\gamma, \phi}(\mc{B})$.

Denote
\[
\Gamma^{alg}_{\gamma, \Lambda}(\mc{B}, \phi) = \mathrm{Alg}_{\mf{C}}(X(b) : b \in \mc{B}) = \mathrm{Alg}_{\mf{C}}(X(b) : b \in \mc{B}^{sa})
\]
and define on it the \emph{vacuum state} $A \mapsto \ip{A \Omega}{\Omega}$.
\end{Construction}

\begin{Remark}
The following observations are straightforward.
\begin{itemize}
\item
If for some $t < 1$, $\gamma + t \phi$ is still completely positive, then the inner product~\eqref{Eq:Inner-product} is positive definite.
\item
If the operators in Construction~\ref{Construction:gamma} are bounded with respect to the semi-norm $\norm{\cdot}_{\gamma, \phi}$ over $\mc{F}_{alg}(\mc{B})$, then they are well-defined over $\mc{F}_{\gamma, \phi}(\mc{B})$ and are also norm-bounded.
\item
For $\vec{\zeta}, \vec{\xi} \in \mc{F}_{alg}(\mc{B})$,
\[
\ip{a^+(b) \vec{\zeta}}{\vec{\xi}}_{\gamma, \phi} = \ip{\vec{\zeta}}{a^-(b^\ast) \vec{\xi}}_{\gamma, \phi}, \quad
\ip{a^0(b) \vec{\zeta}}{\vec{\xi}}_{\gamma, \phi} = \ip{\vec{\zeta}}{a^0(b^\ast) \vec{\xi}}_{\gamma, \phi}.
\]
In particular, for self-adjoint $b$, $X(b)$ is symmetric. In the case when $\Lambda(b \otimes u) = \Lambda(b) u$ for $\Lambda : \mc{B} \rightarrow \mc{B}$, assumption~\eqref{Eq:a0-symmetric} simplifies to $(\Lambda(b))^\ast = \Lambda(b^\ast)$.
\end{itemize}
\end{Remark}

\section{Moments, free cumulants, Boolean cumulants}
\label{Section:Cumulants}
In this section, we give combinatorial expressions for the moments and free cumulants of the variables $\{X(u_i)\}$. For any family of bounded operators $X_1, ..., X_n$ over a Hilbert space $\mc{H}$ and a state $\psi$ over $B(\mc{H})$, their \textit{moments} are the numbers
\begin{equation*}
\psi[X_{i(1)}...X_{i(k)}].
\end{equation*}

\paragraph{\textbf{Combinatorial background. }} A \textit{partition} $\pi$ of a subset $S\subset \mathbb{N}$ is a collection of disjoint subsets of $S$ (called $blocks$ of $\pi$) whose union equals $S$. We will use $i \overset{\pi}{\thicksim} j$ to say that $i$ and $j$ are in the same block of $\pi$. In this paper, we will only be concerned with partitions of $[n] :=\{1, 2, ..., n\}$.

Let NC$(n)$ denote the set of \textit{noncrossing partitions} over $[n]$, that is, those partitions $\pi$ such that there are no $i < j < k < \ell$ such that $i \overset{\pi}{\thicksim} k$ and $j \overset{\pi}{\thicksim} \ell$ unless all four are in the same block.
If $i\in[n]$ is the first element of its block, we will call it an \textit{opening element}, while the last of its block will be called a \textit{closing element}. If $i$ is neither, it will be called a \textit{middle element}.

For distinct blocks $V$ and $W$ of a noncrossing partition $\pi$, $V$ is said to be \textit{inner with respect to} $W$ if $o_W < o_V < c_V < c_W$, where $o_V$ and $o_W$ are the opening elements of $V$ and $W$, respectively, while $c_V$ and $c_W$ are their closing elements. We will simply say a block is \textit{inner} if it is inner with respect to some block, and \textit{outer} if it is not.

Finally, let Int$(n)$ denote the \textit{interval partitions} over $[n]$, that is, those partitions $\pi$ such that whenever $i<j$ and $i \overset{\pi}{\thicksim} j$, we have $i \overset{\pi}{\thicksim} k$ for all $i < k < j$.

The \textit{free cumulants} of $X_1, ..., X_n$ are defined inductively via the \textit{moment-cumulant formula}
\begin{equation*}
\psi[X_{i(1)}, ..., X_{i(k)}] = \sum_{\pi\in\text{NC}(n)} R_\pi [X_{i(1)}, ..., X_{i(k)}],
\end{equation*}
where
\begin{equation*}
R_\pi [X_1, ..., X_k] = \prod_{V\in\pi} R[X_{V(1)}, ..., X_{V(|V|)}].
\end{equation*}
Their \textit{Boolean cumulants} are defined in almost exactly the same manner:
\begin{equation*}
\psi[X_{i(1)}, ..., X_{i(k)}] = \sum_{\pi\in\text{Int}(n)} B_\pi [X_{i(1)}, ..., X_{i(k)}],
\end{equation*}
where
\begin{equation*}
B_\pi [X_1, ..., X_k] = \prod_{V\in\pi} B[X_{V(1)}, ..., X_{V(|V|)}].
\end{equation*}

\begin{Notation}
Denote $\mathrm{NC}_{ns}(n)$ the set of noncrossing partitions of $[n]$ with no singleton blocks. For $u_1, ..., u_n \in\mathcal{B}$, in the moment and Boolean cumulant formulas below,  we will assign to each $\pi\in \mathrm{NC}_{ns}(n)$ the weight operator on $\mathcal{F}_{alg}(\mathcal{B})$
	\begin{equation*}
	W_M(\pi) = \prod_{i=1}^n a_i (u_i),
	\end{equation*}
	where
	\begin{equation*}
	a_i = \begin{cases}
	a^+, \text{ if }i\text{ is a closing element,} \\
	a^-, \text{ if }i\text{ is an opening element,} \\
	a^0, \text{ if }i\text{ is a middle element.}
	\end{cases}
	\end{equation*}
	
\end{Notation}

	Then we have the following standard expansion.

\begin{Prop}
\label{Prop:moments}
Given $u_1, ..., u_n\in\mathcal{B}$, we have the following mixed moment formula:
\begin{equation}
\ip{X(u_1)...X(u_n) \Omega}{\Omega} = \sum_{\pi\in\mathrm{NC}_{ns}(n)} \langle W_M(\pi) \Omega,\Omega\rangle_{\gamma,\phi},
\end{equation}
if $n \geq 2$ and zero if $n=1$.
\end{Prop}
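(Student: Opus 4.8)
The plan is to expand the product multilinearly and isolate, by an occupation-number (height) argument, exactly those terms that contribute to the vacuum expectation, then match them bijectively with $\mathrm{NC}_{ns}(n)$. Writing $X(u_i) = a^+(u_i) + a^-(u_i) + a^0(u_i)$ and using linearity of the inner product,
\[
\ip{X(u_1)\cdots X(u_n)\Omega}{\Omega}_{\gamma,\phi} = \sum_{\vec{\epsilon} \in \set{+,-,0}^n} \ip{a_{\epsilon_1}(u_1)\cdots a_{\epsilon_n}(u_n)\Omega}{\Omega}_{\gamma,\phi}.
\]
By the definition of $W_M$, a word $\vec{\epsilon}$ whose entries match the opening/closing/middle pattern of a partition $\pi$ produces precisely the summand $\ip{W_M(\pi)\Omega}{\Omega}_{\gamma,\phi}$. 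Thus the entire content is to show that the summands which do not vanish identically are exactly those indexed by $\mathrm{NC}_{ns}(n)$.

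First I would record the height dynamics. Each of $a^+, a^0, a^-$ sends a scalar multiple of a simple tensor to a scalar multiple of a simple tensor, changing its tensor-length by $+1$, $0$, $-1$ respectively, and both $a^-$ and $a^0$ annihilate $\Omega$. Applying the operators to $\Omega$ from right to left, let $d_i$ be the length of the state after positions $\geq i$ have acted, so $d_{n+1}=0$ and $d_i = d_{i+1} + \delta_i$ with $\delta_i = +1,0,-1$ according to $\epsilon_i$. The summand for $\vec{\epsilon}$ is zero unless the path is \emph{valid}: $d_i \geq 0$ throughout; whenever $d_{i+1}=0$ one has $\delta_i = +1$ (otherwise we apply $a^-$ or $a^0$ to $\Omega$ and get $0$); and $d_1 = 0$ (otherwise the final vector is a genuine positive-length tensor, orthogonal to $\Omega$). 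These are exactly the non-negative Motzkin-type paths from $0$ to $0$ with no down- or level-step at height $0$.

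Next I would set up the bijection with $\mathrm{NC}_{ns}(n)$. For the forward map, given $\pi \in \mathrm{NC}_{ns}(n)$ assign $a^-$, $a^+$, $a^0$ to opening, closing, and middle elements; this is well defined precisely because the no-singleton condition gives each element a unique type. A direct count gives $d_i = \#\set{V \in \pi : o_V < i \leq c_V} \geq 0$, with $d_1 = 0$, and $d_{i+1}=0$ forces $i$ to be a closing element, so $\delta_i = +1$; hence the word is valid. For the inverse, given a valid word I would reconstruct $\pi$ by a stack read in application order: $a^+$ pushes its position, $a^0$ is recorded against the current top, and $a^-$ pops the top, closing off a block consisting of the popped creation position, its recorded middles, and the current position. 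The LIFO push/pop discipline is exactly the noncrossing condition, every block contains both an $a^-$ (opening) and an $a^+$ (closing) and so has size $\geq 2$, and the level-steps $a^0$ can only occur at height $\geq 1$, i.e. strictly inside an open block, so they land as middle elements. These two maps are mutually inverse.

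Combining, the non-vanishing summands are exactly $\set{\ip{W_M(\pi)\Omega}{\Omega}_{\gamma,\phi} : \pi \in \mathrm{NC}_{ns}(n)}$, which is the claimed formula for $n \geq 2$; for $n=1$ only $a^+(u_1)$ avoids killing $\Omega$, but then $d_1 = 1 \neq 0$, so the expectation is $0$ (equivalently $\mathrm{NC}_{ns}(1) = \emptyset$). The main obstacle is the combinatorial heart of the third step: carefully verifying that the stack dynamics yield noncrossing partitions and nothing else, and that the $a^0$-steps become middle elements. The delicate point is the reversed ordering — the operators act right-to-left while the partition is read left-to-right, so the creation $a^+$ corresponds to a \emph{closing} element and the annihilation $a^-$ to an \emph{opening} element — and one must keep this bookkeeping straight throughout.
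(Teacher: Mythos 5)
Your proof is correct and follows essentially the same route as the paper's: expand the product multilinearly, use the height/Motzkin-path argument to identify the non-vanishing words, and match them with noncrossing non-singleton partitions. The only difference is one of detail — the paper simply invokes the well-known correspondence between noncrossing partitions and Motzkin paths, whereas you make the bijection explicit via the stack reconstruction, which fills in precisely the step the paper leaves to the reader.
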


\begin{Notation}
	Let $\widetilde{\mathrm{NC}}(n) = \{\pi\in\mathrm{NC}(n) | 1 \overset{\pi}{\thicksim} n \}$, and similarly, let
\[
\widetilde{\mathrm{NC}}_{ns}(n) = \{\pi\in\mathrm{NC}(n) | 1 \overset{\pi}{\thicksim} n \text{ and }\pi \text{ has no singleton blocks} \}.
\]
\end{Notation}

\begin{Lemma}
\label{Lemma:Boolean}
For $n\geq 2$, given $u_1, ..., u_n\in\mathcal{B}$, we have the following mixed Boolean cumulant formula:
\begin{equation}
B_n[X(u_1), ..., X(u_n)] = \sum_{\pi\in \widetilde{\mathrm{NC}}_{ns}(n)} \langle W_M(\pi)\Omega, \Omega \rangle_{\gamma,\phi},
\end{equation}
and for $n=1$, the cumulant is zero.
\end{Lemma}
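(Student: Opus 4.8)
The plan is to reduce the claim to the standard recursive description of Boolean cumulants and then to verify that the proposed sum obeys that recursion. Write $m[X(u_1),\ldots,X(u_n)] = \ip{X(u_1)\cdots X(u_n)\Omega}{\Omega}_{\gamma,\phi}$ for the moments. Splitting off the interval block containing the index $1$ shows that the defining relation over $\mathrm{Int}(n)$ is equivalent to
\[
m[X(u_1),\ldots,X(u_n)] = \sum_{k=1}^{n} B_k[X(u_1),\ldots,X(u_k)]\; m[X(u_{k+1}),\ldots,X(u_n)],
\]
where the empty trailing factor ($k=n$) is read as $1$. Since the moments are known, this recursion determines the $B_k$ uniquely, so it suffices to prove that $\beta_n := \sum_{\pi\in\widetilde{\mathrm{NC}}_{ns}(n)} \ip{W_M(\pi)\Omega}{\Omega}_{\gamma,\phi}$ satisfies the same recursion, together with the base case $\beta_1 = 0 = B_1$ (immediate, since $\widetilde{\mathrm{NC}}_{ns}(1)=\varnothing$ and $m_1 = 0$).

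The first ingredient is combinatorial. Given $\pi \in \mathrm{NC}_{ns}(n)$, let $V_1$ be the (necessarily outer) block containing $1$ and set $k = c_{V_1}$, its closing element. Since $\pi$ is noncrossing and $k$ is the largest element of $V_1$, no block straddles the gap between $k$ and $k+1$, so $\pi$ splits into the restrictions $\pi|_{[1,k]}$ and $\pi|_{[k+1,n]}$. The first has $1 \overset{\pi}{\thicksim} k$ and no singletons, hence lies in $\widetilde{\mathrm{NC}}_{ns}(k)$, while the second is an arbitrary element of $\mathrm{NC}_{ns}(n-k)$ after relabeling. I would check that $\pi \mapsto (\pi|_{[1,k]},\pi|_{[k+1,n]})$ is a bijection from $\mathrm{NC}_{ns}(n)$ onto $\bigsqcup_{k=1}^{n} \widetilde{\mathrm{NC}}_{ns}(k)\times \mathrm{NC}_{ns}(n-k)$, so that $\widetilde{\mathrm{NC}}_{ns}(k)$ is precisely the class of ``connected'' (single outer block) no-singleton partitions.

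The second, and main, ingredient is that the weight factorizes across this split,
\[
\ip{W_M(\pi)\Omega}{\Omega}_{\gamma,\phi} = \ip{W_M(\pi|_{[1,k]})\Omega}{\Omega}_{\gamma,\phi}\cdot \ip{W_M(\pi|_{[k+1,n]})\Omega}{\Omega}_{\gamma,\phi}.
\]
The point is that $a^+$, $a^-$, $a^0$ are homogeneous of degrees $+1$, $-1$, $0$ for the grading of $\mc{F}_{alg}(\mc{B})$ by tensor length, so in the right-to-left evaluation of $W_M(\pi)$ the tensor length of the running vector equals the height of the associated Motzkin path from the proof of Proposition~\ref{Prop:moments}. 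Because $[k+1,n]$ is a union of complete blocks, this height returns to $0$ exactly after the operators indexed by $[k+1,n]$ have acted; as level $0$ is the one-dimensional space $\mf{C}\Omega$, the intermediate vector must be the scalar multiple $\ip{W_M(\pi|_{[k+1,n]})\Omega}{\Omega}_{\gamma,\phi}\,\Omega$ of the vacuum. Pulling this scalar out and evaluating the remaining operators indexed by $[1,k]$ on $\Omega$ gives the displayed factorization. I expect this ``return to the vacuum'' bookkeeping, together with checking that no operator transports information across the vacuum, to be the only delicate step.

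Combining the two ingredients, I would group the moment formula of Proposition~\ref{Prop:moments} according to $k = c_{V_1}$ and apply the factorization to obtain $m[X(u_1),\ldots,X(u_n)] = \sum_{k=1}^n \beta_k\, m[X(u_{k+1}),\ldots,X(u_n)]$, which is exactly the recursion above (the cases $n-k=0$ and $n-k=1$ being consistent with $m_0=1$ and $m_1=0$). Uniqueness of the solution to the recursion then yields $\beta_n = B_n$, completing the proof; everything apart from the factorization lemma is a routine unwinding of definitions.
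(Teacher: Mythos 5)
Your proof is correct, but it is organized differently from the paper's. The paper argues by induction directly on the defining relation $\sum_{\pi\in\mathrm{Int}(n)} B_\pi = \sum_{\pi\in\mathrm{NC}_{ns}(n)}\ip{W_M(\pi)\Omega}{\Omega}_{\gamma,\phi}$: it isolates $B_n$, substitutes the inductive hypothesis into every $B_\pi$ with $\pi\neq\hat{1}_n$, and then identifies the resulting double sum (interval partitions with at least two blocks, each block carrying a connected no-singleton noncrossing partition) with the sum over $\mathrm{NC}_{ns,mo}(n)$, the noncrossing no-singleton partitions having more than one outer block; subtracting leaves exactly the connected ones. You instead characterize the Boolean cumulants by the first-block recursion $m_n=\sum_{k=1}^n B_k\, m_{n-k}$ and verify that the candidate $\beta_n$ satisfies the same recursion, via the first-return decomposition of $\mathrm{NC}_{ns}(n)$ at the closing element of the block containing $1$ together with an explicit factorization of the vacuum weight across that split. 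The underlying mathematical content is the same: both arguments rest on the multiplicativity of $\ip{W_M(\cdot)\Omega}{\Omega}_{\gamma,\phi}$ over the outer-block structure of a noncrossing partition. The difference is that you promote this factorization to an explicit lemma, justified correctly by the grading/Motzkin-path bookkeeping (the segment $[k+1,n]$ is a union of complete blocks, so its operators return the vacuum to level zero, and level zero is one-dimensional), whereas the paper uses it tacitly in the step converting $\sum_{\pi\in\mathrm{Int}(n)\setminus\{\hat{1}_n\}}\prod_{V\in\pi}\sum_{\sigma\in\widetilde{\mathrm{NC}}_{ns}(|V|)}\ip{W_M(\sigma)\Omega}{\Omega}_{\gamma,\phi}$ into a single sum over $\mathrm{NC}_{ns,mo}(n)$. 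In that sense your write-up is more self-contained and pins down the one genuinely delicate point; the cost is the extra (easy) uniqueness statement for solutions of the Boolean recursion, while the paper's organization is more compact and matches the template it reuses for the free cumulant formula in Proposition~\ref{Prop:FreeCumulants}.
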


\begin{proof}
$n=1$ is clear. Assume the result holds for all natural numbers less than some $n$, and take $u_1, ..., u_n\in\mathcal{B}$. By Proposition \ref{Prop:moments}, we have
\begin{equation*}
\sum_{\pi\in\mathrm{Int}(n)} B_\pi [X(u_1),...,X(u_n)] = \ip{X(u_1)...X(u_n) \Omega}{\Omega} = \sum_{\pi\in\mathrm{NC}_{ns}(n)} \langle W_M(\pi)\Omega, \Omega\rangle_{\gamma,\phi}.
\end{equation*}

For convenience, denote by $\mathrm{NC}_{ns, mo}(n)$ the noncrossing, no-singleton partitions of $[n]$ that have more than one outer block. After isolating the $n$th cumulant (corresponding to the partition $\hat{1}_n$ consisting of a single block), we get
\begin{eqnarray*}
B_{n}[X(u_1),...,X(u_n)] &=& \sum_{\pi\in\mathrm{NC}_{ns}(n)} \langle W_M(\pi)\Omega, \Omega \rangle_{\gamma,\phi} - \sum_{\pi\in\mathrm{Int}(n)\backslash\{\hat{1}_n\}} B_\pi [X(u_1),...,X(u_n)] \\
&=& \sum_{\pi\in\mathrm{NC}_{ns}(n)} \langle W_M(\pi)\Omega, \Omega\rangle_{\gamma,\phi} - \sum_{\pi\in\mathrm{Int}(n)\backslash\{\hat{1}_n\}} \prod_{V\in\pi} \sum_{\sigma\in \widetilde{\mathrm{NC}}_{ns}(|V|)} \langle W_M(\sigma)\Omega , \Omega \rangle_{\gamma,\phi} \\
&=& \sum_{\pi\in\mathrm{NC}_{ns}(n)} \langle W_M(\pi)\Omega, \Omega\rangle_{\gamma,\phi} - \sum_{\sigma\in \mathrm{NC}_{ns,mo}(n)} \langle W_M(\pi)\Omega , \Omega\rangle_{\gamma,\phi} \\
&=& \sum_{\pi\in\widetilde{\mathrm{NC}}_{ns}(n)} \langle W_M(\pi)\Omega, \Omega\rangle_{\gamma,\phi},
\end{eqnarray*}
where the second equality follows from the induction hypothesis.
\end{proof}

\begin{Notation}
Define the operator $a_\gamma^\sim(b)$ ($b\in\mathcal{B}$) by linear extension of
	\begin{equation*}
	a_\gamma^\sim (b) (u_1 \otimes \ldots \otimes u_n) = \gamma[b u_1] u_2 \otimes \ldots \otimes u_n \text{  for }n\geq 2,
	\end{equation*}
	\begin{equation*}
	a_\gamma^\sim (b) (u_1) = 0 \text{  for }n=1,\text{ and}
	\end{equation*}
	\begin{equation*}
	a_\gamma^\sim (b) (\Omega) = 0.
	\end{equation*}
	
The operator $a_\phi^\sim (b)$ is defined in a similar manner (for $n\geq 2$, apply $\phi$, and $a_\phi^\sim (b) (\Omega) = 0$), with one critical exception:
	\begin{equation*}
	a_\phi^\sim (b) (u_1) = \phi [b u_1]\Omega \text{ for }n=1.
	\end{equation*}
Thus $a^-(b) = a_\gamma^\sim (b) + a_\phi^\sim (b)$.

For $u_1, ..., u_n \in\mathcal{B}$, in the free cumulant formula below, we will assign to each $\pi\in \mathrm{NC}(n)$ the weight operator on $\mathcal{F}_{alg}(\mathcal{B})$
	\begin{equation*}
	W_C(\pi) = \prod_{i=1}^n a_i (u_i),
	\end{equation*}
	where
	\begin{equation*}
	a_i = \begin{cases}
	a^+, \text{ if }i\text{ is a closing element,} \\
	a_\gamma^\sim, \text{ if }i\neq 1\text{ and is an opening element,} \\
	a_\phi^\sim, \text{ if }i = 1 \text{, or} \\
	a^0, \text{ if }i\text{ is a middle element.}
	\end{cases}
	\end{equation*}
\end{Notation}

\begin{Prop}\label{Prop:FreeCumulants}
	For $n\geq 2$, given $u_1, ..., u_n\in\mathcal{B}$, we have the following mixed free cumulant formula:
	\begin{equation}
	R_n[X(u_1), ..., X(u_n)] = \sum_{\pi\in\widetilde{\mathrm{NC}}_{ns}(n)} \langle W_C(\pi)\Omega, \Omega\rangle_{\gamma,\phi},
	\end{equation}
	and for $n=1$, the cumulant is zero.
\end{Prop}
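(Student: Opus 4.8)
The plan is to run a strong induction on $n$ that parallels the proof of Lemma~\ref{Lemma:Boolean}, but feeds the \emph{free} moment-cumulant formula (rather than the Boolean one) through Proposition~\ref{Prop:moments}. Abbreviate $T_m := \sum_{\tau \in \widetilde{\mathrm{NC}}_{ns}(m)} \langle W_C(\tau)\Omega, \Omega\rangle_{\gamma,\phi}$, so that the claim is $T_n = R_n[X(u_1), \dots, X(u_n)]$. The case $n=1$ is immediate, since $X(u_1)\Omega = u_1 \perp \Omega$ forces $R_1 = 0$, matching the empty sum over $\widetilde{\mathrm{NC}}_{ns}(1) = \varnothing$. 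For $n \geq 2$, assume the formula for all smaller indices. Starting from $\langle X(u_1)\cdots X(u_n)\Omega, \Omega\rangle_{\gamma,\phi} = \sum_{\pi \in \mathrm{NC}(n)} R_\pi$, I discard every partition with a singleton block (each contributes a factor $R_1 = 0$), apply Proposition~\ref{Prop:moments} to the left-hand side, peel off the full block $\hat{1}_n$ (contributing $R_n$), and use the inductive hypothesis $R_{|V|} = T_{|V|}$ on every proper block. Rearranging, the proposition reduces to the combinatorial identity
\[
\sum_{\pi \in \mathrm{NC}_{ns}(n)} \prod_{V \in \pi} T_{|V|} \;=\; \sum_{\sigma \in \mathrm{NC}_{ns}(n)} \langle W_M(\sigma)\Omega, \Omega\rangle_{\gamma,\phi}. \qquad (\star)
\]

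To prove $(\star)$ I would expand its right-hand side at the operator level, splitting $a^- = a_\gamma^\sim + a_\phi^\sim$ at every opening element of each $\sigma$. This yields $\langle W_M(\sigma)\Omega, \Omega\rangle_{\gamma,\phi} = \sum_{S} \langle W^S(\sigma)\Omega, \Omega\rangle_{\gamma,\phi}$, the sum ranging over subsets $S$ of the opening elements of $\sigma$, where $W^S$ applies $a_\phi^\sim$ at the openers in $S$ and $a_\gamma^\sim$ at the remaining ones. Two structural observations control which terms survive and what they compute. First, any opener whose action would return the Motzkin path to height $0$ must belong to $S$: on a single tensor $a_\gamma^\sim(b)$ gives $0$, whereas $a_\phi^\sim(b)(u_1) = \phi[b u_1]\Omega$ survives. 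Second, once an opener acts by $a_\phi^\sim$, the nested sub-chain it closes detaches as a scalar factor, because $a_\phi^\sim(b)(w_1 \otimes \cdots) = \phi[b w_1]\,(w_2 \otimes \cdots)$ multiplies the level below by a number depending only on the factors created inside the detaching block.

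Together these observations factor each surviving term into \emph{connected} weights. Fusing every $\sigma$-block whose opener lies outside $S$ into the block that directly encloses it produces a coarser partition $\pi = \pi(\sigma, S)$; fusing nested blocks preserves noncrossingness and creates no singletons, so $\pi \in \mathrm{NC}_{ns}(n)$, and one checks $\langle W^S(\sigma)\Omega, \Omega\rangle_{\gamma,\phi} = \prod_{W \in \pi} \langle W_C(\sigma|_W)\Omega, \Omega\rangle_{\gamma,\phi}$, where each restriction $\sigma|_W$ is a connected piece in $\widetilde{\mathrm{NC}}_{ns}(W)$ whose outermost opener (necessarily in $S$) is precisely the $a_\phi^\sim$ demanded by the definition of $W_C$. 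The map $(\sigma, S) \mapsto (\pi, \{\sigma|_W\}_{W \in \pi})$, restricted to the surviving terms, is a bijection onto the pairs consisting of $\pi \in \mathrm{NC}_{ns}(n)$ and a choice of $\tau_W \in \widetilde{\mathrm{NC}}_{ns}(W)$ for each block, with inverse $(\pi, \{\tau_W\}) \mapsto \big(\bigsqcup_W \tau_W,\ \{\min W : W \in \pi\}\big)$. Summing the factored weights over this bijection (the non-surviving terms, having some height-$0$ opener outside $S$, vanish) turns the right-hand side of $(\star)$ into $\sum_{\pi \in \mathrm{NC}_{ns}(n)} \prod_{W} T_{|W|}$, which is its left-hand side.

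The main obstacle is the second structural observation together with the bijectivity of the fusion map: I must verify that a $\phi$-contraction at an \emph{interior} opener genuinely decouples the computation, so that the single scalar $\langle W^S(\sigma)\Omega, \Omega\rangle_{\gamma,\phi}$ really factors as the stated product of connected $W_C$-weights, and that fusing $\gamma$-coupled nested blocks never introduces a crossing or a singleton nor destroys the outer-block bookkeeping that indexes $\pi$. Everything else is routine bookkeeping with the Motzkin-path correspondence already used in Proposition~\ref{Prop:moments}. As a sanity check, at $n=4$ one has $\mathrm{NC}_{ns}(4) = \{\hat{1}_4, \{12\}\{34\}, \{14\}\{23\}\}$, and the split of $\langle W_M(\{14\}\{23\})\Omega, \Omega\rangle_{\gamma,\phi}$ into its $S = \{1\}$ and $S = \{1,2\}$ parts yields exactly $\langle W_C(\{14\}\{23\})\Omega, \Omega\rangle_{\gamma,\phi} + \langle W_C(\{14\})\Omega, \Omega\rangle_{\gamma,\phi}\,\langle W_C(\{23\})\Omega, \Omega\rangle_{\gamma,\phi}$, confirming both sides of $(\star)$.
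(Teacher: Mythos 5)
Your proof is correct, and it takes a genuinely different route from the paper's. The paper pivots on Boolean cumulants: it combines Lemma~\ref{Lemma:Boolean} with the Belinschi--Nica identity $B_n = \sum_{\pi \in \widetilde{\mathrm{NC}}(n)} R_\pi$ (Theorem~1 of \cite{Belinschi-Nica-Eta}), runs an induction, and quotes Remark~2.14 of that reference for the key bijection between $\{\pi : \sigma \ll \pi\}$ and subsets of blocks of $\sigma$ containing its outer blocks. You instead pivot on the plain free moment--cumulant formula together with Proposition~\ref{Prop:moments}, reduce by induction to your identity $(\star)$ over $\mathrm{NC}_{ns}(n)$, and prove that identity by hand via the splitting $a^- = a_\gamma^\sim + a_\phi^\sim$ and the fusion bijection $(\sigma, S) \leftrightarrow (\pi, \{\tau_W\})$; note that your survival condition (every height-zero opener lies in $S$) is precisely the ``contains the outer blocks'' condition in the paper's cited bijection, so the two inductions are structurally parallel. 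What your route buys is self-containedness: you need neither the Boolean cumulant formula nor any external combinatorics, only the moment formula. What it costs is the work you correctly flag as the main obstacle: stability of fusion under noncrossing/no-singleton constraints (this hinges on fusing only into the \emph{directly} enclosing block), bijectivity, and the factorization of $\langle W^S(\sigma)\Omega, \Omega\rangle_{\gamma,\phi}$ into connected $W_C$-weights. That factorization does hold, for the reason you give: $a_\phi^\sim(b)$ produces a genuine scalar $\phi[b w_1]$ times the remaining tensor, so a $\phi$-opener disconnects, whereas $a_\gamma^\sim(b)$ produces the algebra element $\gamma[b w_1]$ occupying the slot below, so a $\gamma$-opener nests. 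It is worth observing that the paper's proof implicitly relies on the same factorization when it rewrites products of inductively known cumulants $R_{|V|}$ as single labeled-partition weights $\langle W(\sigma,\ell_S)\Omega,\Omega\rangle_{\gamma,\phi}$ without comment; your argument makes that step explicit, in exchange for not leaning on \cite{Belinschi-Nica-Eta}.
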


\begin{proof}
By Theorem~1 in \cite{Belinschi-Nica-Eta} and Lemma~\ref{Lemma:Boolean},
\begin{equation}
\label{Eq:FCProofFirstEq}
\begin{split}
\sum_{\pi\in\widetilde{\mathrm{NC}}_{ns}(n)} R_\pi [X(u_1),...,X(u_n)]
& = B_n [X(u_1),...,X(u_n)] \\
& = \sum_{\pi\in\widetilde{\mathrm{NC}}_{ns}(n)} \sum_{\ell\in\text{Block labelings}(\gamma,\phi)} \langle  W(\pi,\ell)\Omega , \Omega\rangle_{\gamma,\phi},
\end{split}
\end{equation}
where $\ell$ is a labeling of the blocks of $\pi$ with either a $\gamma$ or $\phi$ such that the block containing 1 and $n$ is labeled $\phi$, and $W(\pi,\ell) = \prod_{i=1}^n a_i(u_i)$, where
\begin{equation*}
a_i = \begin{cases}
a^+, \text{ if }i\text{ is a closing element,} \\
a_\gamma^\sim, \text{ if }i\text{ is an opening element of a block labeled }\gamma , \\
a_\phi^\sim, \text{ if }i\text{ is an opening element of a block labeled }\phi , \\
a_0, \text{ if }i\text{ is a middle element.}
\end{cases}
\end{equation*}
This splitting of terms follows from linearity and the fact that the opening of each block is weighted with $(\gamma + \phi)$ of a product involving the operator in that position.

Since the first moment (and thus free cumulant) is zero, for $n=2$, we have
\[R_2[X(u_1),X(u_2)] = M_2[X(u_1)X(u_2)] = \langle a^-(u_1)a^+(u_2)\Omega , \Omega\rangle_{\gamma,\phi},\] and for $n=3$, we have
\[R_3[X(u_1),X(u_2),X(u_3)] = M_3[X(u_1)X(u_2)X(u_3)] = \langle a^-(u_1)a^0(u_2)a^+(u_3)\Omega , \Omega\rangle_{\gamma,\phi}.\]

For induction, assume for all natural numbers less than some $n$, the formula holds.

Let $\ll$ denote the partial order on NC$(n)$ defined by
\begin{equation*}
\sigma \ll \pi \,\,\,\,\, \Leftrightarrow \,\,\,\,\, \sigma \leq \pi \,\text{ and }\, \forall\, V\in\pi , \,\, \min(V) \overset{\sigma}{\thicksim} \max(V).
\end{equation*}

Then the inductive hypothesis implies the left-hand side of (\ref{Eq:FCProofFirstEq}) equals
\begin{equation*}
R_n[X(u_1),...,X(u_n)] + \sum_{\pi\in\widetilde{\mathrm{NC}}_{ns}(n) \backslash \{1_n\}} \sum_{\sigma \ll \pi} \langle W(\sigma)\Omega , \Omega \rangle_{\gamma,\phi},
\end{equation*}
where each $\sigma$ is labeled such that for all $V\in\pi$, $\sigma |_V$ has its unique outer block labeled $\phi$, while the rest (that is, the inner blocks) are labeled $\gamma$.

By Remark~2.14 in \cite{Belinschi-Nica-Eta},
\[
\{\pi\in\mathrm{NC}(n) : \sigma \ll \pi\} \simeq \{S\subset \sigma : S\text{ contains the outer blocks of } \sigma\} =: S_\sigma.
\]
With this bijection, the above sum can be rewritten as
\begin{equation*}
R_n[X(u_1),...,X(u_n)] + \sum_{\sigma\in\widetilde{\mathrm{NC}}_{ns}(n) \backslash \{1_n\}} \sum_{S\in S_\sigma} \langle W(\sigma,\ell_S)\Omega , \Omega \rangle_{\gamma,\phi},
\end{equation*}
where $\ell_S$ is the labeling constructed by giving blocks in $S$ the label $\phi$ and the rest $\gamma$. Subtracting the sum from both sides gives the result.
\end{proof}

\begin{Defn}
	\label{Defn:R-prime}
	For each $u_1, \ldots, u_k \in \mc{B}$, define the linear operator $R'[u_1,...,u_k]$ on $\mc{B}$ by
	\begin{equation}
\label{Eq:R'-definition2}
	R[X(u_1), \ldots, X(u_n)] = \ip{R'[u_2, \ldots, u_{n-1}] u_n}{u_1^\ast} = \phi[u_1 R'[u_2, \ldots, u_{n-1}] u_n] .
	\end{equation}
In general, this operator need not be bounded.
\end{Defn}

\begin{Lemma}
	\label{Lemma:R-prime}
We have the expansion
	\begin{equation}
\label{Eq:R'-definition}
	R'[u_1,...,u_k] = \left( \sum_{\pi \in \mathrm{Int}(\{1,...,k\})} \prod_{V\in\pi} w(V) \right),
	\end{equation}
	where the products are ordered by each block's appearance in the partition (from left to right), and the weights are given by $w(\{i\}) = a^0 (u_i)$ and $w(\{i_1,...,i_k\}) = \gamma[u_{i_1}R'[u_{i_2},...,u_{i_{k-1}}]u_{i_k}]$ for $k\geq 2$, with $R'[\emptyset] = 1$.

In particular, for $\Lambda(u \otimes v) = \Lambda(u) v$, $R'[u_1,...,u_k]$ is the operator of multiplication by an element of $\mc{B}$.
\end{Lemma}

\begin{proof}
	To show that $\ip{\left( \sum_{\pi \in \mathrm{Int}(\{2,...,n-1\})} \prod_{V\in\pi} w(V) \right) u_n }{u_1^\ast} = \sum_{\pi\in\widetilde{\mathrm{NC}}_{ns}(n)} \langle W(\pi)\Omega , \Omega \rangle_{\gamma,\phi}$, we will prove a slightly stronger statement:
	\begin{equation}
	\label{Eq:OperatorCumulants}
	R'' :=	u_1 \left( \sum_{\pi \in \mathrm{Int}(\{2,...,n-1\})} \prod_{V\in\pi} w(V) \right) u_n = \sum_{\pi\in\widetilde{\mathrm{NC}}_{ns}(n)} W'(\pi)\Omega ,
	\end{equation}
	where the weights in the right-hand side are the same as in the cumulant formula, except all opening steps at 1 will be weighted by the identity map instead of $a_\phi^\sim$.
	
	For $n=2$, we have $u_1u_2$ on both sides. For $n=3$, we have $u_1a^0(u_2)u_3$. For induction, assume the lemma holds for all natural numbers less than some $n$. Then
	\begin{equation*}
	\sum_{\pi\in\widetilde{\mathrm{NC}}_{ns}(n)} W'(\pi)\Omega = u_1 \left( \sum_{\pi \in \mathrm{Int}(\{2,...,n-1\})} \prod_{V\in\pi} w(V) \right) u_n ,
	\end{equation*}
	where for $k\geq 2$, $w(\{i_1,...,i_k\}) = \gamma\left[u_{i_1} \left( \sum_{\pi \in \mathrm{Int}(\{2,...,k-1\})} \prod_{V\in\pi} w(V) \right) u_{i_k}\right] $.
	
	At this point, we are done, since each $\pi\in \widetilde{\mathrm{NC}}_{ns}(n)$ can be uniquely constructed by taking some $\sigma\in \mathrm{Int}\{2, ..., n-1\}$, then constructing the unique outer block $\{1, \text{singletons}(\sigma),n\}$, then each nested block immediately below the outer block is recursively constructed in the same manner.
	
	Finally, applying the vacuum state to both sides of (\ref{Eq:OperatorCumulants}) gives the result.
\end{proof}

Next, we prove an equation which characterizes the generating function for this family of operators, which in turn will give us an equation for the free cumulant generating function.

\begin{Lemma}
\label{Lemma:R'-recursion}
Denote $R'_n[u] := R'[u,..., u]$ ($n$ arguments), where $R'_0 [u] = 1$. Then
\begin{equation}
\label{Eq:R'-recursion}
R'_n[u] = \sum_{i=0}^{n-2} R'_i[u]\gamma [u R'_{n-i-2}[u]u] + R'_{n-1}[u]a^0(u).
\end{equation}
\end{Lemma}

\begin{proof}
The claim is analogous to the recursion for the number $I_n$ of interval partitions of length $n$,
\begin{equation*}
I_n = \sum_{i=0}^{n-1} I_i,
\end{equation*}
where $n-i$ is the number of elements in the block containing $n$. The right-hand side in \eqref{Eq:R'-recursion} is obtained in a similar manner, in which each term is obtained by collecting all terms in the sum \eqref{Eq:R'-definition} (over interval partitions) for $R'_n[u]$ for which the weight for the block containing $n$ is a factor. Thus, by summing over $i$ where $n-i$ is the number of elements in the block containing $n$, the term corresponding to each $i$ is a product of $R'_i[u]$ and either $\gamma [u R'_{n-i-2}[u]u]$, the weight of the block containing $n$ for $i \leq n-2$, or $a^0(u)$, the weight of the singleton block containing $n$ for $i = n-1$.
\end{proof}

\begin{Thm}\label{Thm:CumulantGF}
	Let $R'(u) = 1 + \sum_{n=1}^\infty R'_n[u]$ be the generating function of $R'_n[u]$, $n \geq 0$. Then for any $v \in \mc{B}$,
\begin{equation}
	R'(u) v = v + R'(u) \gamma\left[ u R'(u) u \right] v + R'(u) \Lambda(u \otimes v).
	\end{equation}
\end{Thm}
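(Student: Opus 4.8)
The plan is to obtain the stated identity directly from the recursion in Lemma~\ref{Lemma:R'-recursion} by summing over $n$ and collecting the result into the generating function $R'(u) = \sum_{n \geq 0} R'_n[u]$. Throughout I treat elements of $\mc{B}$ as left-multiplication operators, so that each product $R'_i[u]\gamma[u R'_j[u] u]$ is read as the composition of the operator $R'_i[u]$ with multiplication by the element $\gamma[u R'_j[u]u] \in \mc{B}$, and $a^0(u)$ acts by $a^0(u)v = \Lambda(u \otimes v)$; this is the sense in which all terms of \eqref{Eq:R'-recursion} are operators on $\mc{B}$.

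First I would separate the two groups of terms on the right-hand side of \eqref{Eq:R'-recursion} and sum each over $n \geq 1$ (the case $n=0$ contributing only $R'_0[u] = 1$). The second group telescopes: $\sum_{n \geq 1} R'_{n-1}[u] a^0(u) = \bigl( \sum_{m \geq 0} R'_m[u] \bigr) a^0(u) = R'(u) a^0(u)$. For the first group, re-indexing by $j = n - i - 2$ turns $\sum_{n \geq 2} \sum_{i=0}^{n-2} R'_i[u]\gamma[u R'_{n-i-2}[u] u]$ into the double sum $\sum_{i,j \geq 0} R'_i[u]\gamma[u R'_j[u]u]$ over all pairs of nonnegative indices, since $(i, n-i-2)$ ranges over exactly the pairs with $i+j=n-2$ as $n$ runs over $\{2,3,\dots\}$.

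The key algebraic step is then to factor this double sum. Because $\gamma$ is linear and multiplication operators depend linearly on their element, $\sum_j \gamma[u R'_j[u] u] = \gamma\bigl[ u \bigl( \sum_j R'_j[u] \bigr) u \bigr] = \gamma[u R'(u) u]$, so the double sum factors as $\bigl( \sum_i R'_i[u] \bigr)\,\gamma[u R'(u) u] = R'(u)\,\gamma[u R'(u) u]$. Combining the two groups and using $\sum_{n \geq 1} R'_n[u] = R'(u) - R'_0[u] = R'(u) - 1$ yields the operator identity $R'(u) - 1 = R'(u)\gamma[u R'(u) u] + R'(u) a^0(u)$; applying both sides to $v$ and rewriting $R'(u) a^0(u) v = R'(u)\Lambda(u \otimes v)$ gives the theorem.

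I expect the only real obstacle to be bookkeeping rather than genuine difficulty: one must be consistent about when $R'_j[u]$ is applied to the element $u$ (producing the element $u R'_j[u] u \in \mc{B}$ fed into $\gamma$) versus being composed as an operator, and one must justify the rearrangement of the sums. If $R'(u)$ is read as a formal power series graded by the number of arguments (note $R'_n[zu] = z^n R'_n[u]$), all the manipulations are purely algebraic and the factoring is automatic; if instead one works analytically, the interchange and factoring of the sums would need the norm estimates of Section~\ref{Section:Norms}, valid for uniformly small $u$, to guarantee absolute convergence.
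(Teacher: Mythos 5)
Your proposal is correct and follows essentially the same route as the paper: both sum the recursion of Lemma~\ref{Lemma:R'-recursion} over $n$, recognize the $a^0$ part as $R'(u)a^0(u)$, factor the double sum over $i+j=n-2$ into $R'(u)\gamma[u R'(u) u]$ using linearity of $\gamma$, and then apply the resulting operator identity to $v$. Your added remarks on formal-series grading versus norm convergence are sensible bookkeeping but do not change the argument.
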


\begin{proof}
Apply Lemma~\ref{Lemma:R'-recursion}.
\end{proof}

\begin{Example}
For $\Lambda(u \otimes v) = \Lambda(u) v$, $R'(u)$ satisfies
\[
R'(u) = \left( 1 - \Lambda(u) - \gamma[u R'(u) u] \right)^{-1},
\]
and so can be expanded into a formal continued fraction.
\end{Example}

\begin{Cor}\label{Cor:MultiCumulantGF}More generally, for $u_1, ..., u_k\in\mc{B}$, let
\[
R'(u_1, ..., u_k) = \sum_{n=0}^\infty \sum_{j : [n] \rightarrow [k]} R'_n[u_{j(1)},...,u_{j(n)}]
\]
be the generating function for the mixed $R'_n$. Then
	\begin{multline}
	R'(u_1, ..., u_k)v \\
= v + R'(u_1, ..., u_k)\gamma\left[ \left(\sum_{i=1}^{k}u_i\right) R'(u_1, ..., u_k)\left(\sum_{j=1}^{k}u_j\right) \right]v + R'(u_1, ..., u_k) \sum_{i=1}^{k}\Lambda(u_i, v).
	\end{multline}
\end{Cor}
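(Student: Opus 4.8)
The plan is to reduce the multivariate identity to the single-variable Theorem~\ref{Thm:CumulantGF} by exploiting the fact that the operators $R'_n$ are multilinear in their arguments. The essential observation is that, for each fixed $n$, the assignment $(v_1,\dots,v_n)\mapsto R'_n[v_1,\dots,v_n]$ is linear in each of its $n$ slots; granting this, summing over all index functions of length $n$ is the same as substituting $\sum_i u_i$ into every slot.

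First I would verify the multilinearity by induction on $n$, directly from Definition~\ref{Defn:R-prime}. For a singleton block the weight $w(\{i\})=a^0(u_i)$ is linear in $u_i$. For a block $\{i_1,\dots,i_m\}$ with $m\ge 2$, the weight $\gamma[u_{i_1}R'[u_{i_2},\dots,u_{i_{m-1}}]u_{i_m}]$ is multilinear in $u_{i_1},\dots,u_{i_m}$: the map $\gamma$ is linear, multiplication in $\mc{B}$ is bilinear, application of an operator to a fixed element is linear in that element, and $R'[u_{i_2},\dots,u_{i_{m-1}}]$ depends multilinearly on its (strictly fewer) arguments by the inductive hypothesis. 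Since the blocks of an interval partition are disjoint, each variable $u_j$ enters exactly one weight, so the composition $\prod_{V\in\pi}w(V)$ is multilinear in $u_1,\dots,u_n$ (composition respects multilinearity across disjoint variable groups), and summing over $\pi\in\mathrm{Int}(\{1,\dots,n\})$ preserves this.

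With multilinearity in hand, the inner sum over all length-$n$ index functions collapses, so that for each $n$
\[
\sum_{|j|=n} R'_n[u_{j(1)},\dots,u_{j(n)}] = R'_n\Big[\sum_{i=1}^k u_i,\dots,\sum_{i=1}^k u_i\Big] = R'_n\Big[\sum_{i=1}^k u_i\Big].
\]
Summing over $n$ and comparing with the single-variable generating function yields the clean identity $R'(u_1,\dots,u_k)=R'\big(\sum_{i=1}^k u_i\big)$, read grade-by-grade in the number of arguments $n$. I would then invoke Theorem~\ref{Thm:CumulantGF} with $u=\sum_{i=1}^k u_i$ and expand the last term using linearity of $\Lambda$ in its first argument, $\Lambda\big((\sum_i u_i)\otimes v\big)=\sum_{i=1}^k \Lambda(u_i\otimes v)=\sum_{i=1}^k \Lambda(u_i,v)$, which reproduces the claimed equation verbatim.

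There is essentially no hard step here once the multilinearity is established; the only point demanding care is the inductive verification in the second paragraph, together with the remark that both sides are formal sums graded by $n$, so that the substitution $u\mapsto\sum_i u_i$ and the invocation of Theorem~\ref{Thm:CumulantGF} are legitimate level by level and no question of convergence intervenes.
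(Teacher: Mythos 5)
Your proposal is correct and follows essentially the same route as the paper, whose entire proof is the one-line observation that $R'(u_1,\dots,u_k)=R'\bigl(\sum_{i=1}^k u_i\bigr)$ followed by an application of Theorem~\ref{Thm:CumulantGF}. Your inductive verification of the multilinearity of $R'_n$ simply makes explicit the identification that the paper treats as immediate from Definition~\ref{Defn:R-prime}.
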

\begin{proof}
	Apply Theorem~\ref{Thm:CumulantGF} to $R'\left( \sum_{i=1}^{k}u_i \right) = R'(u_1, ..., u_k)$.
\end{proof}

\begin{Remark}
The generating function for $R_n''[u]$ is given by $R''(u) = uR'(u) u$, since $R_n''[u]$ is only defined for $n\geq 2$. Thus, $R''(u)$ satisfies
	\begin{equation}
	R''(u) = u^2 + u R'(u) \gamma\left[ R''(u) \right] u + u R'(u) \Lambda(u, u).
	\end{equation}
	Applying the vacuum state to both sides gives an equation for the cumulant generating function. Note that $R''(u)$ is a series of elements of $\mc{B}$.
\end{Remark}

\begin{Remark}\label{Rmk:Ans-Williams-Jacobi-comparison}
Constructions and results in this section are reminiscent of operator-valued probability theory, such as those in \cite{Ans-Williams-Jacobi}. In this remark we indicate how these constructions differ. The map
\[
b_1 \otimes b_1 \otimes \ldots \otimes b_n \mapsto b_1 X b_2 X \ldots b_n X, \quad \Omega \mapsto 1_{\mc{B}}
\]
is an isomorphism from the algebraic Fock space $\bigoplus_{n=0}^\infty \mc{B}^{\otimes n}$ onto a subspace of non-commutative polynomials $\mc{B} \langle X \rangle$. Using this identification, the relation between the operators in this article (for $\Lambda(f \otimes g) = \Lambda(f) g$) and in Proposition~3.1 from \cite{Ans-Williams-Jacobi} is:
\[
a^+(b) \leftrightarrow b a^\ast, \quad a^-(b) \leftrightarrow a \ b, \quad a^0(b) \leftrightarrow p
\]
with $\alpha_1 = \phi$, $\alpha_n = \gamma + \phi$ for $n \geq 2$, and $\lambda_n = \Lambda(b)$ for $n \geq 1$. Thus the operator $X(1)$ here is the same as the $\mc{B}$-valued $X$, but the interaction with the algebra $\mc{B}$ is different in the two settings. Similarly, the identity
\begin{equation*}
	R'(b) = 1 + R'(b)\gamma\left[ b R'(b)b \right] + R'(b) \Lambda(b).
	\end{equation*}
satisfied by the (under appropriate assumptions) $\mc{B}$-valued generating function $R'(b)$ (from Theorem~\ref{Thm:CumulantGF}) is similar to, but different from the relation
\[
b^{-1} R_\mu(b) b^{-1} = 1 + \gamma[R_\mu(b)b^{-1}]R_\mu(b)b^{-1} + \lambda R_\mu(b)b^{-1}.
\]
from Proposition~3.22 in \cite{Ans-Williams-Jacobi}. They do again coincide (up to a flip) for $b = 1$.
\end{Remark}

In light of the preceding remark, the following is related to Theorem 2 from \cite{AnsFree-Meixner}.

\begin{Prop}
Let $\mc{B}$ be a $C^\ast$-algebra and $u_1, \ldots, u_n \in \mc{B}^{sa}$. Assume that $\Lambda(u \otimes v) = \Lambda(u) v$. Let $X_1, \ldots, X_n$ be a $\mc{B}$-valued semicircular system with means $\Lambda(u_1), \ldots, \Lambda(u_n)$ and the covariance matrix $\eta: \mc{B} \rightarrow M_n(\mc{B})$ with $\eta_{ij}[b] = \gamma[u_i b u_j]$; the existence of such a system in some $\mc{B}$-valued non-commutative probability space $(\mc{A}, \mf{E}, \mc{B})$ is guaranteed  by \cite{Shl-A-semicircular}. Then
\[
R[X(u_{j(1)}), X(u_{j(2)}), \ldots, X(u_{j(k-1)}), X(u_{j(k)})] = \phi[u_{j(1)} \mf{E}[X_{j(2)} \ldots X_{j(k-1)}] u_{j(k)}].
\]
\end{Prop}

\section{Generating functions}
\label{Section:GF}

\subsection{Wick Polynomials}

\begin{Defn}
\label{Defn:Wick}
For $u_1 \otimes \ldots \otimes u_n \in \mc{B}^{\otimes n}$, define the operator $W(u_1 \otimes \ldots \otimes u_n)$ on the algebraic Fock space by
the recursion
\[
\begin{split}
& W(b \otimes u_1 \otimes \ldots \otimes u_n) \\
&\quad = X(b) W(u_1 \otimes \ldots \otimes u_n) - W(a^0(b) (u_1 \otimes \ldots \otimes u_n)) - W(a^-(b)(u_1 \otimes \ldots \otimes u_n)) \\
&\quad = X(b) W(u_1 \otimes \ldots \otimes u_n) - W(\Lambda(b \otimes u_1) \otimes u_2 \otimes \ldots \otimes u_n) - W((\gamma + \phi)[b u_1] u_2 \otimes \ldots \otimes u_n)
\end{split}
\]
with the initial conditions
\[
W(\emptyset) = I, \quad W(u_1) = X(u_1), \quad W(u_1 \otimes u_2) =  X(u_1) W(u_2) - W(\Lambda(u_1 \otimes u_2)) - \phi[u_1 u_2].
\]
It is easy to check using the recursion that
\[
W(u_1 \otimes \ldots \otimes u_n) \Omega = u_1 \otimes \ldots \otimes u_n.
\]
Since each $W(\vec{\xi}) \in \Gamma^{alg}_{\gamma, \Lambda}(\mc{B}, \phi)$, it follows that $\Omega$ is cyclic for this algebra. $W(\vec{\xi})$ is called a \emph{Wick polynomial}. In particular, we denote $W_n(u) = W(u^{\otimes n})$, and note that $W_n(u) \Omega = u^{\otimes n}$.
\end{Defn}

For the remainder of this section, we assume that $\Lambda$ has the special form $\Lambda(u \otimes v) = \Lambda(u) v$.

The next proposition states that formally, the generating function for univariate Wick polynomials is a rational function in $X(u)$. In Section \ref{Section:Norms}, we will give conditions under which the series defining the generating function is convergent and corresponds to a well defined operator.

\begin{Prop}
Denote $W(u) = 1 + \sum_{n=1}^\infty W_n(u)$ and
\[
b(u) = 1 + \Lambda(u) + (\gamma + \phi)[u^2].
\]
Then
\[
(b(u) - X(u)) W(u) = b(u) - \phi[u^2].
\]
\end{Prop}

\begin{proof}
\[
X(u) W_n(u) = W_{n+1}(u) + (\gamma + \phi)[u^2] W_{n-1}(u) + \Lambda(u) W_n(u),
\]
\[
X(u) W_1(u) = W_2(u) + \phi[u^2] + \Lambda(u) W_1(u),
\]
\[
X(u) = W_1(u).
\]
So
\[
X(u) W(u) = W(u) - 1 + \phi[u^2] + (\gamma + \phi)[u^2] (W(u) - 1) + \Lambda(u) (W(u) - 1). \qedhere
\]
\end{proof}

\subsection{Matricial generating functions}

In this section, we will explore a means of recovering the multi-variable polynomials $\{W(u_n, ..., u_m) : 1 \leq n \leq m < \infty \}$ for a sequence $\{u_i : i \in \mf{N}\}$ in $\mc{B}$. Rather than trying to form their generating function as a power series, we will organize these polynomials into an infinite matrix. In the next section, this matrix will be identified with a bounded operator. In the following proposition, $\mc{L}(\ell^2)$ denotes the algebra of linear (not necessarily bounded) operators, and $\set{E_{ij}}$ are the standard matrix units in it.

\begin{Prop}
\label{Prop:Infinite-W}
Let $\set{u_i : i \in \mf{N}} \subset \mc{B}$. Define matrices with operator entries $\Phi \in \mc{L}(\ell^2)$, $\Gamma, A^0 \in \mc{B} \otimes \mc{L}(\ell^2)$, $X, W \in \mc{L} (\mc{F}_{\gamma, \phi}(\mc{B}) \otimes \ell^2)$ as follows:
\begin{align*}
X &= \sum_{i=1}^\infty X(u_i) \otimes E_{i, i+1}, \\
W_n &= W(u_i\otimes \ldots\otimes u_{i+n-1}) E_{i, i+n} \\
W &= \sum_{n=1}^\infty W_n \\
&= \sum_{i=1}^\infty \left( E_{i, i} + \sum_{n=1}^\infty W(u_i \otimes \ldots \otimes u_{i+n-1}) \otimes E_{i, i + n} \right), \\
\Phi &= \sum_{i=1}^\infty \phi[u_i u_{i+1}] \otimes E_{i, i+2}, \\
\Gamma &= \sum_{i=1}^\infty \gamma[u_i u_{i+1}] \otimes E_{i, i+2}, \\
A^0 &= \sum_{i=1}^\infty \Lambda(u_i) \otimes E_{i, i+1}.
\end{align*}

Denote $B = I + A^0 + \Gamma + \Phi \in \mc{L} (\mc{F}_{\gamma, \phi}(\mc{B}) \otimes \ell^2)$. Then
\[
(B - X) W = (B - \Phi).
\]
\end{Prop}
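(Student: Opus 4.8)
The plan is to prove the identity one matrix entry at a time, exploiting that all the matrices involved are banded. Since $X$, $\Phi$, $\Gamma$, $A^0$ each live on a single super-diagonal and $I$ on the main diagonal, the matrix $B - X$ has only three nonzero diagonals: its $(i,i)$ entry is $I$, its $(i,i+1)$ entry is $A^0_{i,i+1} - X(u_i) = \Lambda(u_i \otimes u_{i+1})u_{i+1}^{-1} - X(u_i)$, and its $(i,i+2)$ entry is $\Gamma_{i,i+2} + \Phi_{i,i+2} = (\gamma + \phi)[u_i u_{i+1}]$. As $W$ is upper triangular with $W_{k,k} = I$ and $W_{k,m} = W(u_k, \ldots, u_{m-1})$ for $m > k$, the $(i,m)$ entry of the product is a sum of at most three terms,
\[
\bigl((B-X)W\bigr)_{i,m} = W_{i,m} + \bigl(\Lambda(u_i \otimes u_{i+1})u_{i+1}^{-1} - X(u_i)\bigr) W_{i+1,m} + (\gamma + \phi)[u_i u_{i+1}]\, W_{i+2,m},
\]
and I must match it against $(B - \Phi)_{i,m}$, which equals $I$ for $m = i$, equals $\Lambda(u_i \otimes u_{i+1}) u_{i+1}^{-1}$ for $m = i+1$, equals $\gamma[u_i u_{i+1}]$ for $m = i+2$, and vanishes otherwise.

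First I would dispose of the boundary diagonals. For $m < i$ every $W$-entry appearing vanishes, and for $m = i$ only $W_{i,i} = I$ survives, so both sides equal $I$. For $m = i+1$ I use $W_{i,i+1} = W(u_i) = X(u_i)$ and $W_{i+1,i+1} = I$; the two $X(u_i)$ contributions cancel and leave exactly $\Lambda(u_i \otimes u_{i+1})u_{i+1}^{-1}$, as required.

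The substance is the generic diagonal $m \geq i+2$, where the claim is precisely the defining recursion of the Wick polynomials, rearranged. Substituting
\[
W(u_i, \ldots, u_{m-1}) = X(u_i) W(u_{i+1}, \ldots, u_{m-1}) - W(\Lambda(u_i \otimes u_{i+1}), u_{i+2}, \ldots, u_{m-1}) - W\bigl((\gamma + \phi)[u_i u_{i+1}]u_{i+2}, \ldots, u_{m-1}\bigr)
\]
for $W_{i,m}$ cancels the $X(u_i)$ contribution, and the vanishing of $\bigl((B-X)W\bigr)_{i,m}$ for $m \geq i+3$ then reduces to the two \emph{factoring identities}
\[
\Lambda(u_i \otimes u_{i+1})u_{i+1}^{-1}\, W(u_{i+1}, \ldots, u_{m-1}) = W(\Lambda(u_i \otimes u_{i+1}), u_{i+2}, \ldots, u_{m-1})
\]
and
\[
(\gamma + \phi)[u_i u_{i+1}]\, W(u_{i+2}, \ldots, u_{m-1}) = W\bigl((\gamma + \phi)[u_i u_{i+1}]u_{i+2}, \ldots, u_{m-1}\bigr),
\]
which say a left coefficient in $\mc{B}$ is absorbed into the first argument of the neighboring Wick polynomial. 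The case $m = i+2$ is the base: here the two-variable recursion $W(u_i,u_{i+1}) = X(u_i)W(u_{i+1}) - W(\Lambda(u_i\otimes u_{i+1})) - \phi[u_i u_{i+1}]$ carries only the scalar $\phi[u_i u_{i+1}]$ (the annihilation operator acts on a first-level vector), so the surviving scalar term is $-\phi[u_i u_{i+1}] + (\gamma+\phi)[u_i u_{i+1}] = \gamma[u_i u_{i+1}]$. This is exactly why the right-hand side is $B - \Phi$ rather than $B$; it is the matricial analogue of the $-\phi[u^2]$ in the single-variable Proposition.

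I expect the main obstacle to be making the factoring identities precise, i.e.\ fixing the sense in which the $\mc{B}$-valued and scalar entries of $A^0$, $\Gamma$, $\Phi$ act as operators. The natural choice is left multiplication on the leading tensor factor, equivalently left multiplication under the identification of $\mc{F}_{alg}(\mc{B})$ with a subspace of $\mc{B}\langle X\rangle$ from Remark~\ref{Rmk:Ans-Williams-Jacobi-comparison}; with this convention one checks by induction on length that $c\,W(v_1, \ldots, v_\ell) = W(c v_1, v_2, \ldots, v_\ell)$ for $c \in \mc{B}$. Once these identities are installed, the entire proposition is just the recursion of the single-variable case applied diagonal by diagonal, and what remains is bookkeeping over the three bands.
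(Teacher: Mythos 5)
Your band-by-band reduction is structurally the same as the paper's proof: the paper likewise records the Wick recursion in the form
\[
X(u_i) W(u_{i+1}, \ldots, u_{k-1}) = W(u_i, \ldots, u_{k-1}) + \Lambda(u_i \otimes u_{i+1}) u_{i+1}^{-1} W(u_{i+1}, \ldots, u_{k-1}) + (\gamma + \phi)[u_i u_{i+1}] W(u_{i+2}, \ldots, u_{k-1})
\]
for $k > i+2$, together with the boundary cases $k = i+2$ (where only the scalar $\phi[u_i u_{i+1}]$ appears, which is exactly why the right-hand side is $B - \Phi$ and not $B$) and $k = i+1$, and then compares matrix entries. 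Your bookkeeping at $m \leq i+1$, at $m = i+2$, and at $m \geq i+3$ is all correct.

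The gap is in your justification of the factoring identities. With $c \in \mc{B}$ acting by left multiplication on the leading tensor factor (equivalently, left multiplication under the $\mc{B}\langle X \rangle$ identification), the identity $c\, W(v_1, \ldots, v_\ell) = W(c v_1, v_2, \ldots, v_\ell)$ is \emph{false} as an operator identity, and the induction you propose fails already at $\ell = 1$: applying $c\, X(v)$ and $X(cv)$ to a length-one tensor $w$ gives
\[
(cv) \otimes w + \phi[v w]\, (c \cdot \Omega) + c\,\Lambda(v \otimes w)
\qquad\text{versus}\qquad
(cv) \otimes w + \phi[c v w]\, \Omega + \Lambda(c v \otimes w),
\]
and neither the vacuum components ($\phi[vw]$ vs.\ $\phi[cvw]$, for any fixed choice of $c \cdot \Omega$) nor the gauge components ($c\,\Lambda(v \otimes w)$ vs.\ $\Lambda(cv \otimes w)$) agree for non-scalar $c$; at higher levels the annihilation part gives the same obstruction, since $c\,(\gamma+\phi)[v w_1] \neq (\gamma+\phi)[c v w_1]$ in general. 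In short, $a^-$ and $a^0$ do not intertwine with left multiplication, so under your interpretation the proposition itself would be false whenever $\gamma$ is genuinely $\mc{B}$-valued or $\Lambda \neq 0$. The reading under which the statement and your argument are correct --- and the one the paper takes, signalled by its remarks that these generating functions are formal objects and that $(B-X)^{-1}$ exists ``in a strictly formal sense'' for an infinite family --- is that the product of a $\mc{B}$-valued entry of $A^0$ or $\Gamma$ against a Wick entry of $W$ is not operator composition at all, but is \emph{defined}, via multilinearity of $W$, by absorbing the coefficient into the first argument; this is also the only sense the symbol $u_{i+1}^{-1}$ carries. With that convention your two factoring identities are definitions rather than lemmas to be proved, and the rest of your argument is complete and coincides with the paper's proof.
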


\begin{proof}
For $k > i+2$,
\[
\begin{split}
X(u_i) W(u_{i+1} \otimes \ldots \otimes u_{k-1})
& = W(u_i \otimes \ldots \otimes  u_{k-1}) + \Lambda(u_i) W(u_{i+1} \otimes \ldots \otimes u_{k-1}) \\
&\quad + (\gamma[u_i u_{i+1}] + \phi[u_i u_{i+1}]) W(u_{i+2} \otimes \ldots \otimes u_{k-1}).
\end{split}
\]
For $k = i+2$,
\[
\begin{split}
X(u_i) W(u_{i+1})
& = W(u_i \otimes u_{i+1}) + \Lambda(u_i) W(u_{i+1}) + \phi[u_i u_{i+1}].
\end{split}
\]
For $k = i+1$,
\[
\begin{split}
X(u_i) 1
& = W(u_i)
\end{split}
\]

By comparing matrix entries, we can see that this implies for $n \geq 2$,
\begin{equation*}
X W_n = W_{n+1} + A^0 W_n + (\Gamma + \Phi) W_{n-1},
\end{equation*}
and
\begin{equation}
\label{Eq:Matricial-recursion}
X W = (W - I) + A^0 (W - I) + \Phi + (\Gamma + \Phi) (W - I). \qedhere
\end{equation}
\end{proof}

Since $B-X$ is an upper-triangular matrix with only 1s along the main diagonal, its inverse exists for a finite family of $\{u_i\}$, and in a strictly formal sense for an infinite family. In the next section, we will discuss conditions under which $(B-X)^{-1}$, and $W$, are bounded operators on $\mc{F}_{\gamma, \phi}(\mc{B})\otimes \ell^2$.

Since $W$ contains all the information about the multivariate $W$'s, it should be considered as a generating function $W(u_i : i \in \mf{N})$. By constructing certain corresponding operators on a matricial Fock space construction, we can view $W$ as a more traditional generating function of these operators.

\begin{Defn}\label{Defn:BtensorDspace}
Let $\mc{D}$ be a $\ast$-algebra; below we will take $\mc{D} = \mc{L}(\ell^2)$. $\mc{B} \otimes \mc{D}$ is naturally a $\mc{D}$-bimodule. On the Fock space
\[
\bigoplus_{n=0}^\infty \mc{B}^{\otimes n} \otimes \mc{D} \simeq (\mc{B} \otimes \mc{D})^{\otimes_{\mc{D}} n},
\]
we consider operators indexed by elements of $\mc{B} \otimes \mc{D}$
\[
a^+(b \otimes d) (u_1 \otimes \ldots \otimes u_n \otimes d') = b \otimes u_1 \otimes \ldots \otimes u_n \otimes d d',
\]
\[
a^-(b \otimes d) (u_1 \otimes \ldots \otimes u_n \otimes d') = \gamma[b u_1] u_2 \otimes \ldots \otimes u_n \otimes d d',
\]
\[
a^-(b \otimes d) (u_1 \otimes d') = \phi[b u_1] d d',
\]
\[
a^0(b \otimes d) (u_1 \otimes \ldots \otimes u_n \otimes d') = \Lambda(b) u_1 \otimes  u_2 \otimes \ldots \otimes u_n \otimes d d',
\]
\[
a^-(b \otimes d) (d') = a^0(b \otimes d)(d') = 0.
\]
Then any operator in the algebra generated by $\set{a^+(b \otimes d), a^-(b \otimes d), a^0(b \otimes d) : b \in \mc{B}, d \in \mc{D}}$ is of the form $A \otimes d$, where $A$ acts purely on $\bigoplus_{n=0}^\infty \mc{B}^{\otimes n}$. Moreover, the map
\begin{equation}
\label{Eq:Psi}
\Psi: (A \otimes d) \mapsto \ip{(A \otimes d) (1_{\mc{B}} \otimes 1_{\mc{D}})}{1_{\mc{B}} \otimes 1_{\mc{D}}} = \ip{A 1_{\mc{B}}}{1_{\mc{B}}} d
\end{equation}
is a $\mc{D}$-valued conditional expectation on the algebra generated by $\set{X(b \otimes d) : b \in \mc{B}, d \in \mc{D}}$.
\end{Defn}

\begin{Remark}
\label{Remark:U}
Let $\mc{D} = \mc{L}(\ell^2)$ be the algebra of linear operators on $\ell^2$. Let $\set{u_i : i \in \mf{N}} \subset \mc{B}$, and define the matrix $U \in \mc{B}\otimes \mc{L}(\ell^2)$ by $U = \sum_{i\geq 1} u_i \otimes E_{i, i+1}$. Then the objects from Proposition~\ref{Prop:Infinite-W} are in fact
\[
\Phi = (\phi\otimes I)[U^2], \quad \Gamma = (\gamma \otimes I)[U^2], \quad A^0 = (\Lambda \otimes I)[U],
\]
and $X = a^+(U) + a^-(U) + a^0(U)$. If, in addition, \begin{equation}
\label{Eq:Bounded}
\sup_i \norm{u_i} < \infty,
\end{equation}
then $U \in \mc{B} \otimes \mf{B}(\ell^2)$, where $\mf{B}(\ell^2)$ denotes the algebra of bounded operators.
\end{Remark}

Before interpreting $W(U)$ as a generating function, we will first establish the analogous results for moments and cumulants. The formulas below are similar to, but once again different from, those in Section~6.3 in \cite{Popa-Vinnikov-NC-functions}. The first of these is immediate.

\begin{Lemma}
For $\Psi$ defined in \eqref{Eq:Psi} for the case of $\mc{D} = \mc{L}(\ell^2)$,
\begin{equation}
\Psi[X^n] = \sum_{i=1}^\infty \ip{X(u_i) ... X(u_{i+n-1}) \Omega}{\Omega} \ E_{i, i + n}.
\end{equation}
\end{Lemma}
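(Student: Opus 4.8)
The plan is to unpack the matrix product $\Psi[X^n]$ entrywise, using the definitions of $X$, $\Psi$, and the matrix units, and then recognize the resulting scalar as the vacuum moment computed in Proposition~\ref{Prop:moments}. Recall from the preceding Remark that $X = a^+(U) + a^-(U) + a^0(U)$ with $U = \sum_i u_i \otimes E_{i, i+1}$, and that $X = \sum_i X(u_i) \otimes E_{i, i+1}$ as in Proposition~\ref{Prop:Infinite-W}.

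First I would compute $X^n$ directly from the second form $X = \sum_i X(u_i) \otimes E_{i, i+1}$. Since $E_{i, i+1} E_{j, j+1} = \delta_{i+1 = j} E_{i, i+2}$, the only surviving products are those telescoping chains where the column index of each factor matches the row index of the next. Thus
\begin{equation*}
X^n = \sum_{i=1}^\infty \bigl( X(u_i) X(u_{i+1}) \cdots X(u_{i+n-1}) \bigr) \otimes E_{i, i+n}.
\end{equation*}
This is the key structural observation: because the matrix units form a strict shift by one step, the operator entries automatically line up in increasing index order $u_i, u_{i+1}, \ldots, u_{i+n-1}$, with no index repetition or reordering. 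This is exactly the kind of product to which Proposition~\ref{Prop:moments} applies.

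Next I would apply $\Psi$. By the defining formula \eqref{Eq:Psi}, $\Psi$ acts on a term of the form $A \otimes d$ by returning $\ip{A 1_{\mc{B}}}{1_{\mc{B}}}\, d$; that is, it applies the vacuum state to the operator part and leaves the $\mc{L}(\ell^2)$ part intact. Applying this to each summand of $X^n$ gives
\begin{equation*}
\Psi[X^n] = \sum_{i=1}^\infty \ip{X(u_i) X(u_{i+1}) \cdots X(u_{i+n-1}) \Omega}{\Omega}\, E_{i, i+n},
\end{equation*}
which is precisely the asserted identity. Here I would note that $1_{\mc{B}}$ plays the role of the vacuum $\Omega$ under the identification of the zeroth tensor power with $\mc{D}$-scalars, so $\ip{A 1_{\mc{B}}}{1_{\mc{B}}} = \ip{A \Omega}{\Omega}$.

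This is genuinely routine, as the excerpt already flags (``The first of these is immediate''), so there is no serious obstacle — the only point requiring a moment of care is the bookkeeping of matrix-unit multiplication to confirm that $X^n$ has support exactly on the $n$th off-diagonal with the entries in the correct increasing order, and that $\Psi$ commutes with the summation so it can be applied termwise. Both follow from the linearity and the strictly upper-triangular shift structure of $X$.
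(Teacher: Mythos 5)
Your proof is correct: expanding $X^n$ via the shift structure of the matrix units $E_{i,i+1}$ and then applying $\Psi$ termwise is exactly the routine verification the paper has in mind when it declares this lemma ``immediate'' and omits any proof. Your one point of care---identifying $\ip{A 1_{\mc{B}}}{1_{\mc{B}}}$ with the vacuum expectation $\ip{A\Omega}{\Omega}$ under the identification of the zeroth level with $\mc{D}$-scalars---is the right thing to flag, and nothing further is needed.
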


Given a non-commutative operator-valued probability space
\[
(\mathrm{Alg}(X(b \otimes d) : b \in \mc{B}, d \in \mc{D} = \mc{L}(\ell^2)), \mc{D}, \Psi),
\]
we may define $\mc{D}$-valued free cumulants $\mc{R}[d_0 X, d_1 X, \ldots, d_{n-1} X d_n]$ as in Chapter~4 of \cite{SpeHab}. However, we will only be interested in these for $d_0 = d_1 = \ldots = d_n = 1$. In this case, we have the relation

\begin{Cor}
\label{Cor:MatricialCumulants}
\begin{equation}
\mc{R}_n[X, ..., X] = \sum_{i = 1}^\infty R[X(u_i), ..., X(u_{i + n - 1})] E_{i, i + n}.
\end{equation}
\end{Cor}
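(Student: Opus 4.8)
The plan is to establish the matricial free-cumulant identity by exploiting the strictly upper-triangular, banded structure of $X$ together with the $\mc{D}$-valued moment-cumulant machinery. First I would record the shape of the relevant products. Since $X = \sum_i X(u_i) \otimes E_{i,i+1}$ is supported on the first off-diagonal, a product $X^{n} = X \cdot X \cdots X$ (with $\mc{D} = \mc{L}(\ell^2)$ sitting between consecutive factors as the identity) lands on the $n$-th off-diagonal, with
\[
X^n = \sum_{i=1}^\infty X(u_i) X(u_{i+1}) \cdots X(u_{i+n-1}) \otimes E_{i, i+n}.
\]
Because $\Psi$ reads off the $(1_{\mc{B}} \otimes 1_{\mc{D}})$ vacuum entry and multiplies the resulting scalar by the $\mc{D}$-component, this immediately reproduces the preceding Lemma on $\Psi[X^n]$. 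The key point I would stress is that all the moments of $X$ under $\Psi$ are \emph{diagonal-shift} matrices: $\Psi[X^n]$ lives on the $n$-th off-diagonal and its $(i, i+n)$ entry is precisely the scalar mixed moment $\ip{X(u_i)\cdots X(u_{i+n-1})\Omega}{\Omega}$.

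Next I would apply the $\mc{D}$-valued moment-cumulant formula (Chapter~4 of \cite{SpeHab}) specialized to $d_0 = \cdots = d_n = 1$. The formula expresses $\Psi[X^n]$ as a sum over $\pi \in \mathrm{NC}(n)$ of nested products of cumulants $\mc{R}_{|V|}[X, \ldots, X]$, with the nesting dictated by the noncrossing structure. The crucial structural observation is that this recursion preserves the off-diagonal grading: a cumulant $\mc{R}_k[X,\ldots,X]$ is forced to be supported on the $k$-th off-diagonal, since it is built inductively from the $\Psi[X^j]$ (all off-diagonal) by the same formula, and products of banded matrices on off-diagonals $j_1, \ldots, j_m$ land on the off-diagonal $j_1 + \cdots + j_m$. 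Thus I would show, by induction on $n$ using the moment-cumulant recursion $\mc{R}_n = \Psi[X^n] - (\text{lower-order terms})$, that $\mc{R}_n[X,\ldots,X]$ is supported on the $n$-th off-diagonal, say with $(i, i+n)$ entry $c^{(n)}_i$.

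Finally I would identify the entries $c^{(n)}_i$. Reading the $(i, i+n)$ entry of the matricial moment-cumulant formula, the entrywise products of the banded matrices align exactly so that the $(i,i+n)$ component of each $\mc{R}_\pi$ is the product of the scalar cumulants $R[X(u_{j}), \ldots]$ along the blocks of $\pi$ applied to the appropriately shifted window of variables $u_i, u_{i+1}, \ldots, u_{i+n-1}$. Comparing this with the scalar moment-cumulant formula for $\ip{X(u_i)\cdots X(u_{i+n-1})\Omega}{\Omega}$ (the same $\mathrm{NC}(n)$ sum, now with scalar cumulants $R[X(u_j),\ldots]$), the inductive identity $c^{(n)}_i = R[X(u_i), \ldots, X(u_{i+n-1})]$ follows at once, yielding the claimed formula. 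I expect the main obstacle to be bookkeeping the index alignment in the entrywise products: one must check that, for a fixed target entry $(i,i+n)$, the only surviving term in each matrix product is the one whose consecutive blocks tile the interval $[i, i+n]$ without gap or overlap, so that the scalar cumulants attach to exactly the variables $u_i, \ldots, u_{i+n-1}$ in order. This is precisely where the banded, strictly-off-diagonal structure does the work, and making that alignment explicit is the heart of the argument.
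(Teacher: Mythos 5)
Your proposal is correct and follows essentially the same route as the paper: induction on $n$ via the $\mc{D}$-valued moment--cumulant recursion $\mc{R}_n = \Psi[X^n] - \sum_{\pi \neq \hat{1}_n} \mc{R}_\pi$, using the banded (single off-diagonal) structure of $X$ and its powers to show each $\mc{R}_k$ lives on the $k$-th off-diagonal, and then matching entries against the scalar moment--cumulant formula for $\ip{X(u_i)\cdots X(u_{i+n-1})\Omega}{\Omega}$. Your explicit discussion of the index-alignment (tiling) bookkeeping is simply a more detailed rendering of what the paper compresses into its final sentence.
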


\begin{proof}
By induction. For $n=1$,
\begin{equation*}
\mc{R}_1[X]_{i,j} = \Phi[X]_{i,j} = \phi[X(u_i)], \text{ for }j-i=1.
\end{equation*}
For $n=2$,
\begin{equation*}
\Phi[X^2] = \mc{R}_2[X,X] + \mc{R}_1[X]^2,
\end{equation*}
so
\begin{equation*}
\mc{R}_2[X,X] = \Phi[X^2] - \mc{R}_1[X]^2 = \Phi[X^2] - \Phi[X]^2,
\end{equation*}
then
\begin{eqnarray*}
\mc{R}_2[X,X]_{i,j}&=& \phi[X(u_i)X(u_{i+1})] - \phi[X(u_i)]\phi[X(u_{i+1})] \\
&=& R_2[X(u_i),X(u_{i+1})], \text{ for }j-i=2.
\end{eqnarray*}
Next, assume this is true for up to $n-1$. Then
\begin{equation*}
\mc{R}_n[X,...,X] = \phi[X^n] - \sum_{\pi\in\mathrm{NC}(n)\backslash \widehat{1}_n} \mc{R}_\pi [X,...,X],
\end{equation*}
where $\mc{R}_\pi[X,...,X] = \prod_{V\in\pi} \mc{R}_{|V|}[X,...,X]$. Applying the inductive hypothesis, the matrices in the right-hand side are nonzero only for the $i,j$th entries such that $j-i=n$, and moreover, after summing the right-hand side, the $i,j$th entry (for such $i,j$) is the cumulant $R[X(u_i),..., X(u_{i+j-1})]$.
\end{proof}

Finally, the next proposition follows by comparing matricial recursions.

\begin{Prop}
\label{Prop:Diagonals-W}
Let $U$ be as in Remark~\ref{Remark:U}. Define the operators $W_n(U)$ on the Fock space $\mc{F}_{\gamma, \phi}(\mc{B}) \otimes \mc{L}(\ell^2)$ by the recursion
\[
\begin{split}
X(U) W_n(U)
& = W_{n+1}(U) + \delta_{n \geq 1} (\Lambda \otimes I)(U) W_n(U) \\
& \quad + \Bigl( \delta_{n \geq 2} (\gamma \otimes I)[U^2] + (\phi \otimes I) [U^2] \Bigr) W_{n-1}(U)
\end{split}
\]
with the initial condition $W_0(U) = I$, and their generating function $W(U) = \sum_{n=0}^\infty W_n(U)$. Then in the setting of Proposition~\ref{Prop:Infinite-W}, $W_n = W_n(U)$ and $W = W(U)$ entrywise. Note that if we only have finitely many non-zero entries $u_i$, boundedness condition~\eqref{Eq:Bounded} holds automatically.
\end{Prop}

\section{Norm estimates and convergence of generating functions}
\label{Section:Norms}

The next two results can be used to estimate the norm of $X(f)$. The following lemma is closely related to Lemma~4 in \cite{BozSpeBM1} and Lemma~1 in \cite{AnsQLevy}, although it is not stated in quite this form in either of those sources.

\begin{Lemma}
\label{Lemma:Undeformed-norm}
Let $\mc{H}$ be a Hilbert space, and $K$ a positive operator on it. Denote $\ip{\vec{\zeta}}{\vec{\xi}}_K = \ip{\vec{\zeta}}{K \vec{\xi}}$ the corresponding deformed inner product. Then for an operator $X$ on $\mc{H}$, denoting by $X^\ast$ its adjoint with respect to the deformed inner product, $\norm{X}_K \leq \sqrt{\norm{X} \norm{X^\ast}}$.
\end{Lemma}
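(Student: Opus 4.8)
The goal is to prove the norm estimate $\norm{X}_K \leq \sqrt{\norm{X}\,\norm{X^\ast}}$, where $\norm{\cdot}_K$ denotes the operator norm with respect to the deformed inner product $\ip{\xi}{\eta}_K = \ip{\xi}{K\eta}$, and $X^\ast$ is the adjoint of $X$ taken in that same deformed inner product.

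The plan is to exploit the standard operator-norm identity $\norm{X}_K^2 = \norm{X^\ast X}_K$, which holds in any inner product space because $\norm{X}_K^2 = \sup_{\xi} \ip{X\xi}{X\xi}_K / \ip{\xi}{\xi}_K = \sup_\xi \ip{X^\ast X \xi}{\xi}_K / \ip{\xi}{\xi}_K = \norm{X^\ast X}_K$, the last step using that $X^\ast X$ is self-adjoint and positive with respect to $\ip{\cdot}{\cdot}_K$. So it suffices to control $\norm{X^\ast X}_K$ by the product $\norm{X}\,\norm{X^\ast}$ of the \emph{undeformed} norms. The key bridge is to relate $X^\ast$ to the ordinary Hilbert-space adjoint $X^\dagger$ (adjoint with respect to $\ip{\cdot}{\cdot}$). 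First I would observe that for all $\xi,\eta$,
\[
\ip{X\xi}{K\eta} = \ip{X\xi}{\eta}_K = \ip{\xi}{X^\ast\eta}_K = \ip{\xi}{K X^\ast\eta},
\]
while also $\ip{X\xi}{K\eta} = \ip{\xi}{X^\dagger K \eta}$, so comparing gives the relation $K X^\ast = X^\dagger K$, i.e. the two adjoints are intertwined by $K$.

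With this identity in hand, I would bound $\norm{X}_K = \norm{X^\ast X}_K^{1/2}$ directly. For any vector $\xi$,
\[
\ip{X^\ast X\xi}{X^\ast X\xi}_K = \ip{X^\ast X\xi}{K X^\ast X\xi} = \ip{X^\ast X\xi}{X^\dagger K X\xi},
\]
using $K X^\ast = X^\dagger K$ applied to the vector $X\xi$. Now I would push $X^\dagger$ across the undeformed inner product to turn it into $X$, and repeatedly apply the ordinary Cauchy–Schwarz inequality together with the submultiplicativity of the undeformed operator norm, controlling each factor of $X$ by $\norm{X}$ and each factor of $X^\ast$ (or $X^\dagger$) by $\norm{X^\ast}$ or $\norm{X^\dagger} = \norm{X}$. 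The bookkeeping goal is to show $\norm{X^\ast X}_K \leq \norm{X}\,\norm{X^\ast}\,\norm{X}_K$ (or an analogous inequality), from which $\norm{X}_K^2 = \norm{X^\ast X}_K \leq \norm{X}\,\norm{X^\ast}\,\norm{X}_K$ yields $\norm{X}_K \leq \norm{X}\,\norm{X^\ast}$ after dividing; but the sharper square-root bound requires a more symmetric estimate, balancing one factor of $X$ against one factor of $X^\ast$.

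The main obstacle, and the step demanding the most care, is matching the exponents so that exactly half a power of $\norm{X}$ and half a power of $\norm{X^\ast}$ appear, producing the geometric-mean bound $\sqrt{\norm{X}\,\norm{X^\ast}}$ rather than the cruder arithmetic estimate. The cleanest route is iteration: apply the inequality $\norm{X}_K \leq \norm{X}^{a}\norm{X^\ast}^{b}\,\norm{X}_K^{c}$ to the operator $X$ replaced by $X^\ast X$ (or by powers $X^n$), take $n$th roots, and let $n \to \infty$ so that the deformed-norm factor with exponent $c < 1$ drops out and the undeformed factors converge to the claimed symmetric product. Concretely, I expect that iterating the intertwining relation $KX^\ast = X^\dagger K$ shows $K (X^\ast)^n = (X^\dagger)^n K$, and then a spectral-radius-type argument ($\norm{X}_K = \lim_n \norm{(X^\ast X)^n}_K^{1/2n}$ for the self-adjoint positive operator $X^\ast X$) converts the asymmetric one-step bound into the sharp two-sided one. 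I would verify that $K$ being merely positive (not necessarily invertible) does not obstruct the argument, since every step is phrased through inner products rather than through $K^{-1}$.
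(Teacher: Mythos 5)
Your first step---the intertwining relation $KX^\ast = X'K$, where $X'$ denotes the adjoint in the undeformed inner product (your $X^\dagger$)---is exactly the paper's first step, and it is the right key identity. Everything after that, however, is a plan rather than a proof, and the waypoints you write down would fail. First, the intermediate inequality $\norm{X^\ast X}_K \leq \norm{X}\,\norm{X^\ast}\,\norm{X}_K$ and its consequence $\norm{X}_K \leq \norm{X}\,\norm{X^\ast}$ cannot be theorems: they are not scale-invariant (only the geometric-mean form is), and they fail already for $X = tI$ with $0 < t < 1$, where $\norm{X}_K = t > t^2 = \norm{X}\,\norm{X^\ast}$. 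So there is no ``cruder one-step bound'' of this shape available to feed into an iteration. Second, your mechanism for producing any bound---pushing $X'$ across the \emph{undeformed} inner product and applying ordinary Cauchy--Schwarz there---generates undeformed vector norms $\norm{\xi}$ and $\norm{K\xi}$, and these cannot be converted back into $\norm{\xi}_K$, because $K$ is only assumed positive, not bounded below; this incomparability of the two norms is precisely the difficulty the lemma must overcome, and your parenthetical remark that non-invertibility of $K$ ``does not obstruct the argument'' does not address it. Third, the spectral-radius step $\norm{X}_K = \lim_n \norm{(X^\ast X)^n}_K^{1/2n}$ presupposes that $X^\ast X$ acts as a bounded self-adjoint operator on the deformed space, i.e.\ presupposes $\norm{X}_K < \infty$, which is part of what is being proved; as stated, it is circular.

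The paper closes these gaps by staying entirely at the level of operator inequalities, so that deformed quantities appear on both sides automatically. From $KX^\ast = X'K$ one gets $A := KX^\ast X = X'KX \geq 0$; in particular $A$ is self-adjoint for the undeformed structure, and $A^2 = AA^\ast$ has the form $KBK$ with $B \geq 0$ and $\norm{B} \leq \norm{X}^2\norm{X^\ast}^2$, whence $A^2 \leq \norm{X}^2\,\norm{X^\ast}^2\,K^2$. Operator monotonicity of the square root then gives $KX^\ast X = A \leq \norm{X}\,\norm{X^\ast}\,K$, and evaluating the two quadratic forms at $\xi$ yields $\norm{X\xi}_K^2 = \ip{\xi}{KX^\ast X\xi} \leq \norm{X}\,\norm{X^\ast}\,\ip{\xi}{K\xi}$, which is the lemma. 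If instead you want to rescue your iteration idea, it must be run pointwise on quadratic forms, so that the undeformed factors enter with a vanishing exponent: with $Y = X^\ast X$, Cauchy--Schwarz in $\ip{\cdot}{\cdot}_K$ iterates to $\ip{Y\xi}{\xi}_K \leq \ip{Y^{2^n}\xi}{\xi}_K^{2^{-n}}\,\norm{\xi}_K^{2 - 2^{1-n}}$, while the iterated intertwining relation gives $\ip{Y^{2^n}\xi}{\xi}_K = \ip{\xi}{\left(X'(X^\ast)'\right)^{2^n}K\xi} \leq \left(\norm{X}\,\norm{X^\ast}\right)^{2^n}\norm{\xi}\,\norm{K\xi}$; letting $n \to \infty$ washes out $\norm{\xi}\,\norm{K\xi}$ and yields the claim for each fixed $\xi$, with no a priori boundedness needed. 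That limiting argument (or the paper's operator-monotonicity argument) is exactly the content your sketch leaves unproven.
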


For the remainder of this section, we assume that $\mc{B}$ is a $C^\ast$-algebra, the maps $\phi : \mc{B} \rightarrow \mf{C}$ and $\gamma : \mc{B} \rightarrow \mc{B}$ are bounded, and $\Lambda$ has the special form $\Lambda(u \otimes v) = \Lambda(u) v$ with $\Lambda: \mc{B} \rightarrow \mc{B}$ bounded.

\begin{Prop}\label{Prop:Estimates}
For $b \in \mc{B}$,
\[
\norm{a^+(b)}_{\gamma, \phi} = \norm{a^-(b)}_{\gamma, \phi} \leq \sqrt{\max(\phi[b^\ast b], \norm{(\gamma + \phi)[ b^\ast b]})} \leq \sqrt{\max(\norm{\phi}, \norm{\gamma + \phi})} \ \norm{b}\]
and
\[
\norm{a^0(b)}_{\gamma, \phi} \leq \norm{\Lambda(b)} \leq \norm{\Lambda} \ \norm{b}.
\]
It follows that in this case, the operators in Construction~\ref{Construction:gamma} are well-defined and bounded on $\mc{F}_{\gamma, \phi}(\mc{B})$.
\end{Prop}

\begin{proof}
Since tensors of different length in the Fock space are orthogonal, it suffices to estimate $\norm{a^+(b)}_{\gamma, \phi}$ separately on tensors of fixed length.	For $n=0$,
	\begin{equation*}
	\frac{\langle a^+(b)\Omega,a^+(b)\Omega\rangle_{\gamma,\phi}}{\langle \Omega ,\Omega\rangle_{\gamma,\phi}}
= \langle b ,b\rangle_{\gamma,\phi}
= \phi[b^\ast b] \leq \|\phi\|\|b\|_{\mathcal{B}}^2.
	\end{equation*}
For $n \geq 1$, denote
\[
m(b) (u_1 \otimes \ldots \otimes u_n) = (b u_1) \otimes \ldots \otimes u_n.
\]
and note that $a^-(b^\ast) a^+(b) = m((\gamma + \phi)[b^\ast b])$.

For a fixed tensor $\vec{\xi} \perp \Omega$, denote $\psi_{\vec{\xi}}[u] = \ip{m(u) \vec{\xi}}{\vec{\xi}}_{\gamma, \phi}$. Then $\psi_{\vec{\xi}}$ is linear, and $\psi_{\vec{\xi}}[u^\ast u] = \ip{m(u) \vec{\xi}}{m(u) \vec{\xi}}_{\gamma, \phi} \geq 0$. Therefore
\[
\ip{a^+(b) \vec{\xi}}{a^+(b) \vec{\xi}}_{\gamma, \phi}
= \psi_{\vec{\xi}}[(\gamma + \phi)[b^\ast b]]
\leq \norm{(\gamma + \phi)[b^\ast b]} \psi_{\vec{\xi}}[1]
= \norm{(\gamma + \phi)[b^\ast b]} \norm{\vec{\xi}}_{\gamma, \phi}^2.
\]

	Hence, $\|a^+(b)\|_{\gamma, \phi} \leq \sqrt{\max(\phi[b^\ast b], \norm{(\gamma + \phi)[ b^\ast b]})}$.
	
	Since $a^-(b)$ is the adjoint of $a^+(b)$, their operator norms are equal, so the above inequality also applies to $a^-$.
	
The argument for $a^0(b)$, with $\Lambda$ of the special form, is similar.
\end{proof}

\begin{Prop}\label{Prop:WNorms}
	Denote $c_{\gamma, \phi} = \sqrt{\max(\norm{\phi}, \norm{\gamma + \phi})}$ and
\[
K = \max \left(1, (\sqrt{2} - 1) \left(2 + \frac{\norm{\Lambda}}{c_{\gamma, \phi}} \right) \right).
\]
Then for $(u_n)_{n=1}^\infty \subset \mc{B}$,
	\[
	\norm{W(u_1 \otimes \ldots \otimes u_n)} \leq (\sqrt{2} + 1)^n c_{\gamma, \phi}^n K \norm{u_1} \ldots \norm{u_n}.
	\]
\end{Prop}

\begin{proof}
$\norm{W(\emptyset)} = 1 \leq K$, and by Proposition~\ref{Prop:Estimates},
\[
\norm{W(u_1)} = \norm{X(u_1)} \leq (2 c_{\gamma, \phi} + \norm{\Lambda}) \norm{u_1} \leq (\sqrt{2} + 1) c_{\gamma, \phi} K \norm{u_1}.
\]
For $n=2$,
	\[
	W(u_1 \otimes u_2) = X(u_1) W(u_2) - \Lambda(u_1) W(u_2) - \phi[u_1 u_2] = (a^+(u_1) + a^-(u_1)) W(u_2) - \phi[u_1 u_2],
	\]
and
	\[
\begin{split}
	\norm{W(u_1 \otimes u_2)}
& \leq 2 c_{\gamma, \phi} \ \norm{u_1} \norm{W(u_2)} + \norm{\phi} \ \norm{u_1} \ \norm{u_2},
\end{split}
	\]
while for $n \geq 3$,
	\[
	W(u_1 \otimes u_2 \otimes \ldots \otimes u_n) = (a^+(u_1) + a^-(u_1)) W(u_2 \otimes \ldots \otimes u_n) - (\gamma + \phi)[u_1 u_2] W(u_3 \otimes \ldots \otimes u_n).
	\]
and
	\[
	\norm{W(u_1, u_2, \ldots, u_n)}
	\leq 2 c_{\gamma, \phi} \ \norm{u_1} \norm{W(u_2, \ldots, u_n)} + \norm{\gamma + \phi} \ \norm{u_1} \ \norm{u_2} \ \norm{W(u_3, \ldots, u_n)}.
	\]
Since $(\sqrt{2} + 1)^n = 2 (\sqrt{2} + 1)^{n-1} + (\sqrt{2} + 1)^{n-2}$, the result follows by induction.
\end{proof}

Let's turn our attention back to the infinite matrix $W$ whose structure we established in the previous section, particularly the decomposition into a sum of matrices with a single non-zero diagonal. The following more general proposition will be combined with this fact to obtain conditions under which $W$ is a bounded operator. The following result must be standard, although we do not have a reference for it.

\begin{Lemma}
\label{Lemma:Norm-over-diagonals}
Let $T \in \mf{B} (\mc{H} \otimes \ell^2)$, which we can write as an infinite matrix with entries $T_{ij} \in \mf{B}(\mc{H})$. Then
\[
\norm{T} \leq \sum_{i \in \mf{Z}} \sup_{k \geq \max(1, 1 - i)} \norm{T_{k, k+i}}.
\]
\end{Lemma}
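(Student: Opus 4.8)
The plan is to decompose the operator $T$ into its diagonals and bound the norm of each diagonal piece, then sum. For each $i \in \mf{Z}$, let $T^{(i)}$ denote the matrix obtained from $T$ by keeping only the entries $T_{k, k+i}$ on the $i$th diagonal and zeroing out the rest, so that $T = \sum_{i \in \mf{Z}} T^{(i)}$ (at least formally, as the single-diagonal pieces). By the triangle inequality, $\norm{T} \leq \sum_{i \in \mf{Z}} \norm{T^{(i)}}$, so the whole statement reduces to showing that the norm of a single-diagonal matrix is the supremum of the norms of its entries, i.e. $\norm{T^{(i)}} = \sup_{k} \norm{T_{k, k+i}}$, where $k$ ranges over those indices for which $k \geq 1$ and $k + i \geq 1$ (the latter being the constraint $k \geq 1 - i$).

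First I would establish the single-diagonal bound. A matrix with only the $i$th diagonal nonzero acts, up to the shift $E_{k, k+i}$, as a block-diagonal operator: writing a vector $\xi = (\xi_k)_k \in \mc{H} \otimes \ell^2$ with $\xi_k \in \mc{H}$, the operator $T^{(i)}$ sends $\xi$ to the vector whose $k$th component is $T_{k,k+i}\, \xi_{k+i}$. Because the map $k \mapsto k+i$ is injective, distinct components of the output draw on distinct components of the input, so there is no interference between blocks. Concretely, $\norm{T^{(i)} \xi}^2 = \sum_k \norm{T_{k,k+i}\, \xi_{k+i}}^2 \leq \bigl( \sup_k \norm{T_{k,k+i}} \bigr)^2 \sum_k \norm{\xi_{k+i}}^2 \leq \bigl( \sup_k \norm{T_{k,k+i}} \bigr)^2 \norm{\xi}^2$, which gives the upper bound $\norm{T^{(i)}} \leq \sup_k \norm{T_{k,k+i}}$. (The reverse inequality, which I do not even need, would follow by testing on vectors supported in a single block.) Summing over $i$ then yields the claimed estimate.

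The main point requiring care is the range of the index $k$ in the supremum. Since the matrix is indexed by $\mf{N} = \set{1, 2, \ldots}$, the entry $T_{k, k+i}$ only exists when both the row index $k \geq 1$ and the column index $k + i \geq 1$, i.e. $k \geq 1 - i$; combining these gives exactly the constraint $k \geq \max(1, 1-i)$ appearing in the statement. For $i \geq 0$ this is just $k \geq 1$, while for $i < 0$ it is $k \geq 1 - i$. I would simply note that indices outside this range contribute no entry (equivalently, the corresponding $\xi_{k+i}$ with $k+i \leq 0$ do not exist), so the sum over $k$ in the norm computation ranges over precisely $k \geq \max(1, 1-i)$.

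I do not expect any serious obstacle here: the only subtlety is the bookkeeping with the index ranges and the (harmless) fact that the decomposition $T = \sum_i T^{(i)}$ and the triangle inequality are being applied to a possibly infinite sum, so I would remark that the inequality holds trivially if the right-hand side is infinite, and otherwise the partial sums $\sum_{\abs{i} \leq N} T^{(i)}$ converge to $T$ in an appropriate sense (e.g. strongly, entrywise) with norms bounded by the stated series, so the bound passes to the limit. The genuinely mathematical content is entirely contained in the single-diagonal estimate, which is elementary.
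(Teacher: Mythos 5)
Your proposal is correct and follows essentially the same route as the paper's proof: decompose $T$ into its single diagonals, use the orthogonality of the components (Pythagoras in $\mc{H} \otimes \ell^2$) to bound each single-diagonal piece by the supremum of its entry norms, and then sum with the triangle inequality. Your extra remarks on the index bookkeeping and on passing the bound through the (possibly infinite) sum are harmless refinements of what the paper leaves implicit.
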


\begin{Cor}
For $\sup_i \norm{u_i} <  \frac{\sqrt{2} - 1}{\sqrt{\max(\norm{\phi}, \norm{\gamma + \phi})}}$,
\begin{enumerate}
\item
$W$ is a bounded operator.
\item
$B - X$ is invertible, and
\[
W = (B - X)^{-1} (B - \Phi).
\]
\end{enumerate}
\end{Cor}
\begin{proof}
For (a), by Proposition~\ref{Prop:WNorms}, $\norm{W_{i, i+n}} \leq K \left((\sqrt{2} + 1) c_{\gamma, \phi} \sup_i \norm{u_i} \right)^n$.
The result follows from Lemma~\ref{Lemma:Norm-over-diagonals}.

For (b), let $Y = B-X-1 = A^0 + \Gamma + \Phi - X$. Note that $Y$ has non-zero entries on only two diagonals:
\begin{eqnarray*}
	Y_{i,i+1} &=& a^+(u_i) + a^-(u_i) \\
	Y_{i,i+2} &=& (\gamma + \phi)[u_i u_{i+1}].
\end{eqnarray*}
So apply Lemma~\ref{Lemma:Norm-over-diagonals} and Proposition~\ref{Prop:Estimates} to get the estimate
\begin{eqnarray*}
	\norm{Y} &\leq& (\norm{a^+} + \norm{a^-})\left(\sup_i \norm{u_i}\right) + \norm{\gamma + \phi} \left(\sup_i \norm{u_i}\right)^2 \\
	&\leq& 2 c_{\gamma, \phi} \left(\sup_i \norm{u_i}\right) + c_{\gamma, \phi}^2 \left(\sup_i \norm{u_i}\right)^2 < 1.
\end{eqnarray*}
Therefore $(B-X)$ is invertible. The formula follows from Proposition~\ref{Prop:Infinite-W}.
\end{proof}

Finally, we return to the question of convergence for the cumulant generating function.

\begin{Prop}\label{Prop:CGF-Convergence}
	Let $K = \max\{\sqrt{\norm{\gamma}}, \norm{\Lambda}\}$. Then $\norm{R_n'[u]} \leq C_n K^n \norm{u}^n$, where $C_n$ is the Catalan number. If $\norm{u}\leq \frac{1}{4K}$, then the generating function $R'(u)$ converges, and thus the cumulant generating function does as well.
\end{Prop}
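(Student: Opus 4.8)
The plan is to reduce the statement to the convergence of a scalar majorant series extracted from the recursion in Lemma~\ref{Lemma:R'-recursion}, and then to analyze that series through its algebraic generating function. Write $r_n = \norm{R'_n[u]}$ for the operator norm of $R'_n[u]$ on $L^2(\mc{B}, \phi)$, with $r_0 = 1$. Applying the triangle inequality to~\eqref{Eq:R'-recursion} together with the estimates of Proposition~\ref{Prop:Estimates} — namely $\norm{a^0(u)} \leq \norm{\Lambda} \norm{u}$ and $\norm{\gamma[u R'_{m}[u] u]} \leq \norm{\gamma} \norm{u}^2 r_m$ — gives the majorizing recursion
\[
r_n \leq \norm{\gamma} \norm{u}^2 \sum_{i=0}^{n-2} r_i \, r_{n-2-i} + \norm{\Lambda} \norm{u} \, r_{n-1}.
\]
Here the estimate of the $\gamma$-term uses that, for $\Lambda(f \otimes g) = \Lambda(f) g$, the operator $R'_m[u]$ is multiplication by an element of $\mc{B}$, so that the ``closing-off'' $u R'_m[u] u$ is a genuine element of $\mc{B}$ with norm at most $\norm{u}^2 \norm{R'_m[u]}$.

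Next I would pass to generating functions. Let $(\bar r_n)$ be the sequence obtained by taking equality above, so that $r_n \leq \bar r_n$, and set $g(z) = \sum_{n \geq 0} \bar r_n z^n$. The convolution structure of the recursion turns it into the quadratic functional equation
\[
g(z) = 1 + \norm{\gamma} \norm{u}^2 z^2 g(z)^2 + \norm{\Lambda} \norm{u} z \, g(z),
\]
whose branch with $g(0) = 1$ is
\[
g(z) = \frac{1 - \norm{\Lambda}\norm{u} z - \sqrt{(1 - \norm{\Lambda}\norm{u} z)^2 - 4 \norm{\gamma}\norm{u}^2 z^2}}{2 \norm{\gamma}\norm{u}^2 z^2}.
\]
Since $g$ is algebraic with nonnegative coefficients, Pringsheim's theorem identifies its radius of convergence with the smallest positive $z$ at which the radicand vanishes, namely $z_0 = \bigl( (\norm{\Lambda} + 2\sqrt{\norm{\gamma}}) \norm{u} \bigr)^{-1}$.

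Finally I would check that the hypothesis forces $z = 1$ strictly inside the disk of convergence. With $K = \max\{\sqrt{\norm{\gamma}}, \norm{\Lambda}\}$ one has $\norm{\Lambda} + 2\sqrt{\norm{\gamma}} \leq 3K$, so $\norm{u} = \norm{b} \leq \tfrac{1}{4K}$ yields $(\norm{\Lambda} + 2\sqrt{\norm{\gamma}})\norm{u} \leq \tfrac{3}{4} < 1$, i.e. $z_0 \geq \tfrac{4}{3} > 1$. Hence $\sum_n \bar r_n = g(1) < \infty$, and by comparison $\sum_n \norm{R'_n[u]} \leq g(1) < \infty$, so $R'(u)$ converges in operator norm. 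Convergence of $R''(u) = u R'(u) u$ and, since $\phi$ is bounded, of the cumulant generating function $R(u) = \phi[R''(u)]$ follows immediately.

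The main obstacle is the estimate of the $\gamma$-term in the general case $\Lambda(u \otimes v) \neq \Lambda(u) v$, where $R'_m[u]$ is a genuine operator and $u R'_m[u] u$ must be interpreted via the closing-off of Lemma~\ref{Lemma:R-prime} (equation~\eqref{Eq:OperatorCumulants}); one must verify that its norm is still controlled by $\norm{u}^2 \norm{R'_m[u]}$ in the relevant $L^2(\mc{B},\phi)$-sense, so that $\gamma$ produces a bounded multiplication operator and the majorant recursion remains valid. A secondary, routine point is confirming that the branch of the square root chosen above reproduces the combinatorially defined nonnegative coefficients $\bar r_n$, which is what justifies identifying the radius of convergence with the branch point rather than with the apparent (removable) pole at $z = 0$.
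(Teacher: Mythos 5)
Your proof is correct and, after the common first step, takes a genuinely different route from the paper's. Both arguments begin identically: combining Lemma~\ref{Lemma:R'-recursion} with the estimates of Proposition~\ref{Prop:Estimates} to obtain the majorant recursion $r_n \leq \norm{\gamma}\norm{u}^2\sum_{i=0}^{n-2} r_i\, r_{n-2-i} + \norm{\Lambda}\norm{u}\, r_{n-1}$ (and both share the looseness you flag about interpreting $u R'_m[u] u$ when $\Lambda(u\otimes v)\neq\Lambda(u)v$ --- the paper's proof glosses over exactly the same point, so this is not a gap relative to its own standard). From there the paper argues combinatorially: it bounds $\norm{R'_n[u]}$ by $r_n K^n\norm{u}^n$, where $r_n$ counts terms and satisfies the Motzkin-type recursion $r_n=\sum_{i=0}^{n-2}r_i r_{n-i-2}+r_{n-1}$, dominates $r_n$ by the Catalan numbers, and invokes $C_n\sim 4^n/(n^{3/2}\sqrt{\pi})$; the polynomial factor $n^{-3/2}$ is what rescues the boundary case $\norm{u}=1/(4K)$. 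You instead solve the quadratic functional equation for the majorant generating function $g$ and locate its dominant singularity, the branch point $z_0=\left((\norm{\Lambda}+2\sqrt{\norm{\gamma}})\norm{u}\right)^{-1}$, showing $z_0\geq 4/3>1$. Your route buys a sharper threshold: convergence whenever $(\norm{\Lambda}+2\sqrt{\norm{\gamma}})\norm{u}<1$, which improves the paper's $4K\norm{u}\leq 1$ since $\norm{\Lambda}+2\sqrt{\norm{\gamma}}\leq 3K$; this is because the singularity analysis captures the true Motzkin-like growth rate of the majorants, which the Catalan domination overestimates. The paper's route buys elementarity: no branch selection, no singularity location, just a term count and Stirling. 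Two minor points on your write-up: the appeal to Pringsheim is unnecessary for the direction you need, since analyticity of the chosen branch on $\abs{z}<z_0$ (the radicand's smallest-modulus root is $z_0$, and the apparent pole at $0$ is removable) already gives radius of convergence at least $z_0$; and your closed form for $g$ tacitly assumes $\norm{\gamma}\neq 0$ --- if $\gamma=0$ the functional equation is linear with radius $1/(\norm{\Lambda}\norm{u})$, an easy special case that should be noted.
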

\begin{proof}
	By Lemma~\ref{Lemma:R'-recursion}, $R'_0 [u] = 1$, $\norm{R'_1[u]} \leq \norm{\Lambda}\norm{u} \leq K \norm{u}$, and
\[
\norm{R'_2[u]} \leq \norm{\Lambda}^2 + \norm{\gamma} \leq 2 K^2 \norm{u}^2.
\]
Recursively,
	\begin{equation*}
\label{Eq:RPrimeNorm}
\begin{split}
	\norm{R'_n[u]} & \leq \sum_{i=0}^{n-2} \norm{R'_i[u]}\norm{R'_{n-i-2}[u]}\norm{\gamma}\norm{u}^2 + \norm{R'_{n-1}[u]}\norm{\Lambda}\norm{u} \\
& \leq \sum_{i=0}^{n-2} C_i C_{n-i-2} K^{n-2} \norm{\gamma} \norm{u}^n + C_{n-1} K^{n-1} \norm{\Lambda} \norm{u}^n \\
& \leq C_n K^n \norm{u}^n
\end{split}
	\end{equation*}
since $C_{n-1} = \sum_{i=0}^{n-2} C_i C_{n-i-2}$ and $C_n \geq 2 C_{n-1}$. So the generating function has norm bounded by
	\begin{equation*}
		\sum_{n=0}^\infty \norm{R'_n[u]} \leq \sum_{n=0}^\infty C_n K^n \norm{u}^n.
	\end{equation*}
A well-known approximation of the Catalan numbers via Stirling's formula is $C_n \sim \frac{4^n}{n^{3/2}\sqrt{\pi}}$. So this series converges by assumption.
\end{proof}

For the generating function $\mc{R}[U] = \sum_{n=0}^{\infty} R_n[X, ..., X]$ for matricial cumulants, we also have the following corollary:

\begin{Cor}
\label{Cor:Norm-cumulant-GF}
Under the conditions of Proposition~\ref{Prop:CGF-Convergence}, with the assumption $\norm{u}\leq \frac{1}{4K}$ replaced with $\sup_{i}\norm{u_i} \leq \frac{1}{4K}$, the infinite matrix $\mc{R}[U]$ is bounded, with
\begin{equation}
\norm{\mc{R}[U]} \leq \sum_{n=2}^{\infty} \norm{\phi} C_{n-2} K^{n-2} \left(\sup_i \norm{u_i}\right)^n .
\end{equation}
\end{Cor}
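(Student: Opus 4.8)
The plan is to read the matrix $\mc{R}[U]$ off diagonal-by-diagonal from Corollary~\ref{Cor:MatricialCumulants}, bound each scalar entry using the cumulant formula of Lemma~\ref{Lemma:R-prime}, and then assemble the operator norm via Lemma~\ref{Lemma:Norm-over-diagonals}. By Corollary~\ref{Cor:MatricialCumulants}, the only nonzero entries of $\mc{R}[U]$ lie on the superdiagonals indexed by $n \geq 2$ (the cumulants of order $0$ and $1$ vanish, since the first moment is zero), and the $(i, i+n)$ entry is the \emph{scalar} free cumulant $R[X(u_i), \ldots, X(u_{i+n-1})]$. Consequently, in Lemma~\ref{Lemma:Norm-over-diagonals} the operator norm of each entry is simply its absolute value, and the outer sum runs over $n \geq 2$ only.

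First I would estimate a single entry. Applying Lemma~\ref{Lemma:R-prime} with the shifted arguments gives
\[
R[X(u_i), \ldots, X(u_{i+n-1})] = \phi\bigl[u_i\, R'[u_{i+1}, \ldots, u_{i+n-2}]\, u_{i+n-1}\bigr],
\]
so that $\abs{R[X(u_i), \ldots, X(u_{i+n-1})]} \leq \norm{\phi}\, \norm{u_i}\, \norm{R'[u_{i+1}, \ldots, u_{i+n-2}]}\, \norm{u_{i+n-1}}$. The middle factor is the norm of $R'$ on $n-2$ arguments, so I would invoke the multilinear estimate established inside the proof of Proposition~\ref{Prop:CGF-Convergence}, namely $\norm{R'[v_1, \ldots, v_m]} \leq r_m K^m \norm{v_1} \cdots \norm{v_m}$, with $m = n-2$. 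This yields
\[
\abs{R[X(u_i), \ldots, X(u_{i+n-1})]} \leq \norm{\phi}\, r_{n-2}\, K^{n-2}\, \norm{u_i}\, \norm{u_{i+1}} \cdots \norm{u_{i+n-1}} \leq \norm{\phi}\, r_{n-2}\, K^{n-2} \Bigl(\sup_{k>0}\norm{u_k}\Bigr)^n.
\]
The point is that this bound is uniform in $i$, so taking the supremum over $i$ on each fixed diagonal leaves the right-hand side unchanged.

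Feeding this into Lemma~\ref{Lemma:Norm-over-diagonals} and summing the per-diagonal suprema over $n \geq 2$ then produces exactly
\[
\norm{\mc{R}[U]} \leq \sum_{n=2}^\infty \norm{\phi}\, r_{n-2}\, K^{n-2} \Bigl(\sup_{k>0}\norm{u_k}\Bigr)^n,
\]
which is the asserted bound. Finiteness of this series is not a new issue: the comparison $r_{n-2} \leq C_{n-2}$ with the Catalan numbers and the Stirling asymptotic $C_m \sim 4^m / (m^{3/2}\sqrt{\pi})$ already appear in the proof of Proposition~\ref{Prop:CGF-Convergence}, and they give convergence precisely under the hypothesis $\sup_{k>0}\norm{u_k} \leq \frac{1}{4K}$.

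I do not expect a genuinely hard step here; the corollary is essentially a matricial repackaging of Proposition~\ref{Prop:CGF-Convergence}. The one point requiring care is the passage from the single-variable bound $\norm{R'_m[u]} \leq r_m K^m \norm{u}^m$ to its fully multilinear counterpart for distinct arguments, which is legitimate because each $R'_m$ is multilinear and the counting constant $r_m$ does not depend on which elements are inserted. A second, minor, point is confirming that the entries are genuinely scalar, so that $\norm{\cdot}$ in Lemma~\ref{Lemma:Norm-over-diagonals} collapses to $\abs{\cdot}$; this is immediate since each $R[X(u_i), \ldots, X(u_{i+n-1})]$ is a vacuum-state free cumulant.
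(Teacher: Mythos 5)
Your proof is correct and follows essentially the same route as the paper: both read off the diagonals of $\mc{R}[U]$ from Corollary~\ref{Cor:MatricialCumulants}, bound each entry by $\norm{\phi}\, r_{n-2} K^{n-2} \left(\sup_{k>0}\norm{u_k}\right)^n$ using Lemma~\ref{Lemma:R-prime} together with the multilinear estimate from the proof of Proposition~\ref{Prop:CGF-Convergence}, and then sum over diagonals via Lemma~\ref{Lemma:Norm-over-diagonals} and the Catalan-number comparison. You have merely made explicit the steps (the scalar cumulant formula, the multilinearity of $R'_n$, and the scalar-entry observation) that the paper's one-sentence proof leaves implicit.
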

\begin{proof}
Using Corollary~\ref{Cor:MatricialCumulants}, the $i$th diagonal of $\mc{R}[U]$ has the bound $\norm{\phi} C_{n-2} K^{n-2} \left(\sup_i \norm{u_i}\right)^n$, so the convergence of the sum follows from Lemma~\ref{Lemma:Norm-over-diagonals} and the same argument from the proof of Proposition~\ref{Prop:CGF-Convergence}.
\end{proof}

\section{Traciality}
\label{Section:Trace}

Recall that a state $\psi$ on a non-commutative algebra $\mc{A}$ is \textit{tracial}, or a \textit{trace}, if for all $x,y\in\mc{A}$,
\begin{equation*}
\psi[xy] = \psi[yx].
\end{equation*}

In this section, we give conditions under which the vacuum state is tracial. We start with an auxiliary result.

\begin{Defn}
On $\mc{F}_{\text{alg}}(\mc{B})$, define an anti-linear involution by the linear extension of
\[
S(u_1 \otimes \ldots \otimes u_n) = u_n^\ast \otimes \ldots \otimes u_1^\ast.
\]
For $b \in \mc{B}$, denote $X_r(b) = S X(b^\ast) S$. Explicitly, for $n \geq 2$,
\[
\begin{split}
X_r(u_1 \otimes \ldots \otimes u_n)
& = u_1 \otimes \ldots \otimes u_n \otimes b + u_1 \otimes \ldots \otimes u_{n-1} \otimes \Lambda(b^\ast \otimes u_n^\ast)^\ast \\
&\quad + u_1 \otimes \ldots \otimes u_{n-1} (\gamma + \phi)[u_n b],
\end{split}
\]
with appropriate modifications for $n = 1$ and $n=0$. Denote $\Gamma^{alg}_{\gamma, \Lambda}(\mc{B}, \phi; r) = S \Gamma^{alg}_{\gamma, \Lambda}(\mc{B}, \phi) S$ the algebra generated by $\set{X_r(b) : b \in \mc{B}}$. For $\vec{\xi} \in \mc{F}_{\text{alg}}(\mc{B})$ a simple tensor, denote $W_r(\vec{\xi}) = S W(\vec{\xi}) S$. Then $W_r(S(\vec{\xi})) \Omega = \vec{\xi}$, so that $\Omega$ is cyclic for $\Gamma^{alg}_{\gamma, \Lambda}(\mc{B}, \phi; r)$.
\end{Defn}

\begin{Prop}
\label{Prop:Interacting-traciality}
\

\begin{enumerate}
\item
Suppose that $\Gamma^{alg}_{\gamma, \Lambda}(\mc{B}, \phi)$ and $\Gamma^{alg}_{\gamma, \Lambda}(\mc{B}, \phi; r)$ commute. Then $W$ extends to a linear map on $\mc{F}_{\text{alg}}(\mc{B})$,
\[
\Gamma^{alg}_{\gamma, \Lambda}(\mc{B}, \phi) = \set{W(\vec{\xi}) : \vec{\xi} \in \mc{F}_{\text{alg}}(\mc{B})}, \quad \Gamma^{alg}_{\gamma, \Lambda}(\mc{B}, \phi; r) = \set{W_r(\vec{\xi}) : \vec{\xi} \in \mc{F}_{\text{alg}}(\mc{B})},
\]
and for $\vec{\xi} \in \mc{F}_{\text{alg}}(\mc{B})$,
\[
W(\vec{\xi})^\ast = W(S(\vec{\xi})),
\]
Therefore for $A \in \Gamma^{alg}_{\gamma, \Lambda}(\mc{B}, \phi)$, $S(A \Omega) = A^\ast \Omega$.
\item
In addition to the assumption in (a), suppose that $\phi$ is tracial on $\mc{B}$. Then the vacuum state is tracial on $\Gamma^{alg}_{\gamma, \Lambda}(\mc{B}, \phi)$.
\end{enumerate}
\end{Prop}

\begin{proof}
\

\begin{enumerate}
\item
Since $\Omega$ is cyclic for $\Gamma^{alg}_{\gamma, \Lambda}(\mc{B}, \phi)$ and $\Gamma^{alg}_{\gamma, \Lambda}(\mc{B}, \phi; r)$, it is separating for them (and the von Neumann algebras they generate). Using the recursion and induction, for $\vec{\xi} \in \mc{F}_{\text{alg}}(\mc{B})$
\[
\begin{split}
W(a^+(b)(\vec{\xi})) \Omega
& = X(b) W(\vec{\xi}) \Omega - W(a^0(b) (\vec{\xi})) \Omega - W(a^-(b)(\vec{\xi})) \Omega \\
& = X(b) \vec{\xi} - a^0(b) (\vec{\xi}) - a^-(b)(\vec{\xi}) \\
& = a^+(b)(\vec{\xi}).
\end{split}
\]
In particular, if $\vec{\xi} = 0$, then $W(\vec{\xi}) \Omega = 0$, so  $W(\vec{\xi}) = 0$, and the map $W$ is well defined.
\\
Clearly, $W(\vec{\xi}) \in \Gamma^{alg}_{\gamma, \Lambda}(\mc{B}, \phi)$. On the other hand, using induction on the number of factors, a monomial $X(u_0) X(u_1) \ldots X(u_n)$ is a linear combination of terms of the form $X(u_0) W(\vec{\xi})$, each of which is a linear combination of the terms of the form $W(\vec{\xi'})$ by the recursion.

Next, note that
\[
X_r(b) \Omega = X(b) \Omega
\]
and
\[
W(b)^\ast = X(b)^\ast = X(b^\ast) = W(b^\ast).
\]
Using induction,
\[
\begin{split}
W(a^+(b)(\vec{\xi}))^\ast
& = W(\vec{\xi})^\ast X(b)^\ast - W(a^0(b) (\vec{\xi}))^\ast - W(a^-(b)(\vec{\xi}))^\ast \\
& = W(S(\vec{\xi})) X(b^\ast) - W(S(a^0(b) (\vec{\xi}))) - W(S(a^-(b)(\vec{\xi}))).
\end{split}
\]
On the other hand, if $X_r(b)$ commutes with $\Gamma^{alg}_{\gamma, \Lambda}(\mc{B}, \phi)$,
\[
\begin{split}
W(S(\vec{\xi})) X(b^\ast) \Omega
& = W(S(\vec{\xi})) X_r(b^\ast) \Omega
= X_r(b^\ast) W(S(\vec{\xi})) \Omega \\
& = S X(b) S S(\vec{\xi})
= S X(b) (\vec{\xi}).
\end{split}
\]
Since $\Omega$ is separating for $\Gamma^{alg}_{\gamma, \Lambda}(\mc{B}, \phi)$, this implies that
\[
W(S(\vec{\xi})) X(b^\ast)
= W(S X(b) (\vec{\xi}))
= W(S(a^+(b)(\vec{\xi}))) + W(S(a^0(b)(\vec{\xi}))) + W(S(a^-(b)(\vec{\xi}))).
\]
Thus
\[
W(a^+(b)(\vec{\xi}))^\ast = W(S(a^+(b)(\vec{\xi}))).
\]
\item
We first show that $X_r(b)^\ast \Omega = X(b)^\ast \Omega$. Indeed, for $\vec{\xi} \in \mc{B}^{\otimes n}$, $\ip{X_r(b) (\vec{\xi})}{\Omega} = 0$ if $n \neq 1$. For $u \in \mc{B}$, using the fact that $\phi$ is a trace,
\[
\begin{split}
\ip{X_r(b) (u)}{\Omega}
& = \ip{S a^-(b^\ast)(S(u))}{\Omega}
= \overline{\phi[b^\ast u]}
= \phi[u b]
= \phi[b u]
= \ip{u}{b^\ast}
= \ip{u}{X(b)^\ast \Omega}
\end{split}
\]

\br
To check that the vacuum state is tracial on $\Gamma^{alg}_{\gamma, \Lambda}(\mc{B}, \phi)$, it suffices to verify that
\[
\begin{split}
\ip{W(\vec{\xi}) X(b) \Omega}{\Omega}
& = \ip{W(\vec{\xi}) X_r(b) \Omega}{\Omega}
= \ip{X_r(b) W(\vec{\xi}) \Omega}{\Omega} \\
& = \ip{W(\vec{\xi}) \Omega}{X_r(b)^\ast \Omega}
= \ip{W(\vec{\xi}) \Omega}{X(b)^\ast \Omega} \\
& = \ip{X(b) W(\vec{\xi}) \Omega}{\Omega}. \qedhere
\end{split}
\]
\end{enumerate}
\end{proof}

\begin{Thm}\label{Thm:Traciality}
Suppose the operators $X(u) : u \in \mc{B}$ are bounded. Denote
\[
\Gamma_{\gamma, \Lambda}(\mc{B}, \phi) = W^\ast(X(u) : u \in \mc{B}) = W^\ast(X(u) : u \in \mc{B}^{sa}).
\]
Consider the conditions
\begin{equation}
\label{Eq:Trace-star}
\Lambda(v^\ast \otimes u^\ast)^\ast = \Lambda(u \otimes v),
\end{equation}
\begin{equation}
\label{Eq:Trace-Associative}
u \gamma[y v] - \gamma[u y] v = \Lambda(u \otimes \Lambda(y \otimes v)) - \Lambda(\Lambda(u \otimes y) \otimes v),
\end{equation}
\begin{equation}
\label{Eq:Trace-Extra}
\Lambda(u \otimes y \gamma[z v]) - \Lambda(u \otimes y) \gamma[z v] = \Lambda(\gamma[u y] z \otimes v) - \gamma[u y] \Lambda(z \otimes v)
\end{equation}
and
\begin{equation}
\label{Eq:Trace:gamma-self-adjoint}
\phi[\gamma[u] v] = \phi[u \gamma[v]].
\end{equation}
\begin{enumerate}
\item
Each $X_r(u)$ commutes with $\Gamma_{\gamma, \Lambda}(\mc{B}, \phi)$ if and only if conditions \eqref{Eq:Trace-star}, \eqref{Eq:Trace-Associative}, \eqref{Eq:Trace-Extra}, and \eqref{Eq:Trace:gamma-self-adjoint} hold. Note that if $\gamma$ is scalar-valued, the third condition is true automatically.
\item
If the vacuum state is tracial on $\Gamma_{\gamma, \Lambda}(\mc{B}, \phi)$, then conditions \eqref{Eq:Trace-star}, \eqref{Eq:Trace-Associative}, and \eqref{Eq:Trace-Extra} hold, and $\phi$ is tracial.
\item
If $\phi$ is tracial and each $X_r(u)$ commutes with $\Gamma_{\gamma, \Lambda}(\mc{B}, \phi)$, then the vacuum state is tracial on $\Gamma_{\gamma, \Lambda}(\mc{B}, \phi)$.
\end{enumerate}
Under the assumptions in (c), all the conclusions in Proposition~\ref{Prop:Interacting-traciality} hold, the map $S$ extends to an anti-linear isometry on $\mc{F}_{\gamma, \phi}(\mc{B})$, and the map $A \mapsto S A^\ast S$ implements the canonical anti-isomorphism between $\Gamma_{\gamma, \Lambda}(\mc{B}, \phi)$ and its commutant.
\end{Thm}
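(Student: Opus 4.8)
The plan is to prove the three parts in the order (a), (b), (c): part (a) supplies the operator identities through an explicit commutator computation, part (b) is a direct application of Theorem~\ref{Thm:Interacting-traciality}, and part (c) is the converse, argued at the level of the vacuum state. First I would compute $X_r(u) = S X(u^\ast) S$ explicitly on a simple tensor. Since $S$ reverses and conjugates, $X_r(u)$ is the right-handed analogue of $X(u)$: its creation part appends $u$ on the right, its annihilation part contracts the last factor via $u_1 \otimes \cdots \otimes u_n \mapsto u_1 \otimes \cdots \otimes u_{n-1}(\gamma+\phi)[u_n u]$, and its gauge part replaces $u_n$ by $\Lambda(u^\ast \otimes u_n^\ast)^\ast$. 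The structural point is that $X(v)$ touches only the first one or two tensor factors while $X_r(u)$ touches only the last one or two; hence on tensors of length $\geq 3$ their regions of influence are disjoint (the sole potential overlap, the two annihilations meeting at the middle factor, is a commuting multiplication), so $[X_r(u),X(v)]$ vanishes there automatically. Consequently $X_r(u)$ commutes with every $X(v)$, equivalently with all of $\Gamma_{\gamma,\Lambda}(\mc{B},\phi)$, if and only if $[X_r(u),X(v)]$ annihilates $\Omega$ and every tensor of length $1$ and $2$.

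I would then evaluate the commutator on these three inputs and sort the output by tensor length. On $\Omega$ the only discrepancy is $\Lambda(v \otimes u) - \Lambda(u^\ast \otimes v^\ast)^\ast$, giving exactly \eqref{Eq:Trace-star}. On a length-one input, after inserting \eqref{Eq:Trace-star} and the standing symmetry \eqref{Eq:a0-symmetric} (which forces the scalar part of the output to match automatically), the length-one part of the output reduces precisely to \eqref{Eq:Trace-Associative}. On a length-two input the length-zero part of the output is the double annihilation, which collapses to $\phi[v u_1 \gamma[u_2 u]] = \phi[\gamma[v u_1] u_2 u]$, i.e.\ \eqref{Eq:Trace:gamma-self-adjoint}, while the length-one part yields \eqref{Eq:Trace-Extra}. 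Because each equality is computed as an equivalence at the given length, these conditions are also manifestly sufficient, proving part (a); the remark that \eqref{Eq:Trace-Extra} holds automatically for scalar $\gamma$ is immediate, since then $\gamma[zv]$ is a scalar and, by linearity of $\Lambda$ in each argument, both sides collapse to zero.

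For part (b) I would verify that the present setup meets the hypotheses of Theorem~\ref{Thm:Interacting-traciality} with $\mc{D}_n = \mc{B}^{\otimes n}$ and $S(u_1 \otimes \cdots \otimes u_n) = u_n^\ast \otimes \cdots \otimes u_1^\ast$: the adjoint relations for $a^+$ and $a^0$ are Proposition~\ref{Prop:a0-symmetric}, the spanning conditions are immediate, \eqref{Eq:a^+-Omega} follows from $a^+(b)\Omega = b$, \eqref{Eq:S*S} is equivalent to traciality of $\phi$, and the commuting hypothesis is our assumption (which by (a) encodes the four conditions). Theorem~\ref{Thm:Interacting-traciality}(c) then gives that the vacuum state is tracial. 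The trailing assertions follow as well: $W(u_1 \otimes \cdots \otimes u_n)^\ast = W(u_n^\ast \otimes \cdots \otimes u_1^\ast)$ is the identity $W(\vec\eta)^\ast = W(S\vec\eta)$ of that theorem; $X_r(u)^\ast = X_r(u^\ast)$ is immediate from $X_r(u) = S X(u^\ast) S$, $S^2 = I$ and $X(b)^\ast = X(b^\ast)$ (again Proposition~\ref{Prop:a0-symmetric}); and, with $\Omega$ now a cyclic, separating, tracial vector and $S(A\Omega) = A^\ast \Omega$, $S$ is the Tomita involution with trivial modular operator, hence the modular conjugation $J$, so $A \mapsto J A^\ast J$ is the canonical anti-isomorphism onto the commutant carrying $X(u)$ to $X_r(u)$.

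For the converse (c) I would work directly with the vacuum state, since its traciality is strictly weaker than the commuting property of (a) — which is exactly why only three of the four conditions reappear. The second moment is $\ip{X(u)X(v)\Omega}{\Omega} = \phi[uv]$, so cyclic invariance forces $\phi$ tracial. I would then read the identities off the cyclic invariance of the low mixed moments from Proposition~\ref{Prop:moments}: $\ip{X(a)X(b)X(c)\Omega}{\Omega} = \phi[a\Lambda(b \otimes c)]$ gives, together with \eqref{Eq:a0-symmetric}, the relation \eqref{Eq:Trace-star}, while the length-four and length-five moments isolate the $\Lambda\circ\Lambda$ and $\gamma$ terms producing \eqref{Eq:Trace-Associative} and \eqref{Eq:Trace-Extra}. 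The main obstacle is the upgrade from these scalar identities, which have the shape $\phi[w\,( \text{LHS}-\text{RHS})] = 0$, to genuine $\mc{B}$-valued operator identities; this requires producing a free probe $w$ ranging over all of $\mc{B}$ (via an extra creation or annihilation factor, as in the spanning argument of Theorem~\ref{Thm:Interacting-traciality}(a)) and then invoking faithfulness of $\phi$ in the form $\phi[z^\ast z]=0 \Rightarrow z=0$. The condition \eqref{Eq:Trace:gamma-self-adjoint} is precisely the one no such probe can detect, since at the level of the vacuum state it is absorbed into the scalar traciality of $\phi$; this is the structural reason it is absent from the conclusion of (c).
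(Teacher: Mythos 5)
Your proposal is correct and takes essentially the same route as the paper: part (a) by the identical nearest-neighbor reduction to tensors of length $0,1,2$ followed by a direct commutator calculation (your assignment of \eqref{Eq:Trace-star}, \eqref{Eq:Trace-Associative}, \eqref{Eq:Trace:gamma-self-adjoint}, \eqref{Eq:Trace-Extra} to the respective tensor lengths checks out), part (b) and the trailing statements by invoking Theorem~\ref{Thm:Interacting-traciality} exactly as the paper does, and part (c) by cyclic invariance of low-order vacuum expectations together with faithfulness of $\phi$. The only cosmetic difference is in (c), where you use moments via Proposition~\ref{Prop:moments} while the paper uses free cumulants of order up to $5$ via Proposition~\ref{Prop:FreeCumulants}; the two are interchangeable here, since once $\phi$ is tracial (order $2$) the extra lower-order product terms appearing in the moments of orders $3$--$5$ are already cyclically invariant, so your version trades the citation that traciality makes free cumulants cyclically symmetric for slightly longer formulas.
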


\begin{proof}
For part (a), note that since we are working on a Fock space with depth two action, to show that $X(u) X_r(v) = X_r(v) X(u)$ it suffices to consider their actions on tensors of length $0$, $1$, and $2$. A calculation shows that this is equivalent to conditions \eqref{Eq:Trace-star}-\eqref{Eq:Trace:gamma-self-adjoint} together with the previously assumed \eqref{Eq:a0-symmetric}.

For part (b), if the vacuum state is tracial, the joint free cumulants are cyclically symmetric. Using Proposition~\ref{Prop:FreeCumulants} for cumulants of order up to $5$, we obtain the conditions \eqref{Eq:Trace-star}-\eqref{Eq:Trace-Extra} and traciality of $\phi$.

Part (c), and the rest of the statements, follow from Proposition~\ref{Prop:Interacting-traciality} and general theory.
\end{proof}

\section{Examples}
\label{Sec:Examples}

In this section we show how the motivating examples, and their generalizations, fit into the general setting of this paper.

\begin{Example}
\label{Example:Bozejko}
Let
\[
\mc{B} = L^\infty([0,1]), \quad \phi[f] = \int_0^1 f(x) \,dx,
\]
and $\eta$ positive a.e. and $\lambda$ a real-valued function. Define
\[
\gamma[f](x) = \eta(x) f(x), \quad \Lambda(b \otimes f)(x) = \lambda(x) b(x) f(x).
\]
Then the construction is closely related to the setting in \cite{Bozejko-Lytvynov-Meixner-I,Bozejko-Lytvynov-Meixner-II} (see also \cite{Bozejko-Lytvynov-Rodionova-Anyon}).

More generally, for an algebra $\mc{B}$, let $\eta, \lambda$ be central elements in $\mc{B}$ such that $\eta + 1$ is positive and $\lambda$ self-adjoint. Then we may take
\[
\gamma[f] = \eta f, \quad \Lambda(b \otimes f) = \lambda b f.
\]
If $\eta + 1$ is invertible, the inner product is positive definite. The operators are typically unbounded. The vacuum state is always tracial.

The free cumulants are given by an explicit formula
\[
R[X(f_1), \ldots, X(f_n)] = \sum_{\pi \in \cNC} \phi[\eta^{\abs{\pi} - 1} \lambda^{n - 2 \abs{\pi}} f_1 f_2 \ldots f_n].
\]
In particular, in this case $R'[(X(f_2), \ldots, X(f_{n-1})]$ may be identified with an element of $\mc{B}$, and their generating function satisfies a quadratic relation
\begin{equation} \label{Eq:BozekjoRPrime}
R'(f)
= 1 + \lambda R'(f) f + \eta R'(f) f R'(f).
\end{equation}
In the case where $\mc{B} = L^\infty([0,1])$, we may take $R'(f)$ to be the operator of pointwise multiplication by a function $R'(f)(x)$ which satisfies
\[
R'(f)(x)
= 1 + R'(f)(x) \lambda(x) f(x) + R'(f)(x)^2 \eta(x) f(x),
\]
and so is a solution of a quadratic equation for each $x$. The free cumulant generating function can then be found via
\[
R(f) = \int f(x)^2 R'(f)(x) \,dx.
\]
Finally, if $f g = g f = f^\ast g = f g^\ast = 0$, then $X(f)$ and $X(g)$ are free, and if $f$ is self-adjoint with $\eta f = \lambda f = 0$, then $X(f)$ is semicircular. See also the end of the section for related results.
\end{Example}

\begin{Example}
\label{Example:Lenczewski}
Let
\[
\mc{B} = L^\infty([0,1]), \quad \phi[f] = \int_0^1 f(x) \,dx,
\]
and
\[
w(x, y) =
\begin{cases}
p & \text{if } 0 < x < y, \\
q & \text{if } 0 < y < x, \\
1 & \text{otherwise}
\end{cases}
\]
for $p,q \geq 0$. Define
\[
\gamma[f](x) = \int w(x,y) f(y) \,dy
\]
and
\[
\Lambda(b \otimes f)(x) = f(x) \int w(x,y) b(y) \,dy + f(y) b(y).
\]
We obtain the construction from \cite{Lenczewski-Salapata-Kesten}.

The specific form of a function $w$ above allowed for explicit combinatorial calculations, but $\gamma$ can be defined by the same formula as long as $(w + 1)$ is non-negative a.e., and it is natural to take
\[
\Lambda(b \otimes f)(x) = \int \lambda(x, y, z) b(y) f(z) \,dy \,dz
\]
or
\[
\Lambda(b)(x) = \int b(y) \lambda(x, dy).
\]
In the first case, necessary conditions for the vacuum state to be tracial imply that $\lambda(x, y, z)$ is conjugate-symmetric in its arguments, and $w(x,y)$ is symmetric in its arguments. These conditions are not sufficient, and typically the vacuum state is not tracial.

In the second case, we may identify $R'(f)$ with a function satisfying
\[
R'(f)(x) = \frac{1}{1 - \int \lambda(x, y) f(y) \,dy - \int w(x,y) R'(f)(y) f(y)^2 \,dy}.
\]

One can generalize this construction further. Instead of the map $\gamma$, it suffices to use a bilinear map $\ip{\cdot}{\cdot}_\gamma: \mc{B} \otimes \mc{B} \rightarrow \mc{B}$ which is star-linear in each argument and such that $\ip{\cdot}{\cdot}_\gamma + \ip{\cdot}{\cdot}_\phi$ is positive semi-definite. Let $w \in \mc{B} \otimes \mc{B}$ satisfy $w + (1 \otimes 1) \geq 0$, and define
\[
\ip{f}{g}_\gamma = (\phi \otimes I)[(f \otimes 1) w (g \otimes 1)].
\]
Then the corresponding inner product on the Fock space is positive semi-definite. If $w + 1 \otimes 1$ is invertible, the inner product is positive definite.

Note that
\[
\ip{\vec{\zeta}}{\vec{\xi}}_{\gamma, \phi} = \ip{\vec{\zeta}}{K \vec{\xi}},
\]
where $K$ is the multiplication operator by
\[
[(1 + w) \otimes 1^{\otimes (n-2)}] \cdot [1 \otimes (1 + w) \otimes 1^{\otimes (n-3)}] \cdot \ldots \cdot [1^{\otimes (n-2)} \otimes (1 + w)].
\]
In the commutative case we may identify
\[
K(s_1, s_2, \ldots s_n) = (1 + w(s_1, s_2)) \ldots (1 + w(s_{n-1}, s_n)).
\]
So Lemma~\ref{Lemma:Undeformed-norm} applies. Moreover, $a^-(b) = \ell^\ast(b) [(1 + w) \otimes 1^{\otimes (n-2)}]$, where $\ell^\ast$ is the free annihilation operator. It follows that for $w \in \mc{B} \otimes_{\min} \mc{B}$ and $\norm{\Lambda(b \otimes b_1)}_\phi \leq \norm{\Lambda} \ \norm{b} \ \norm{b_1}_\phi$,
\[
\norm{a^-(b)}_{\gamma, \phi} = \norm{a^+(b)}_{\gamma, \phi} \leq \norm{b}_\phi \sqrt{\norm{w} + 1}, \quad \norm{a^0(b)}_{\gamma, \phi} \leq \norm{\Lambda} \ \norm{b}.
\]
In particular, the operators extend to $\mc{F}_{\gamma, \phi}(\mc{B})$.
\end{Example}

\begin{Example}
\label{Example:MA}
Let $\mc{B} = \mf{R}^d$ with component-wise multiplication and the standard basis $\set{e_i}_{i=1}^d$,
\[
\phi[e_j] = 1, \quad \gamma[e_j] = \sum_{i=1}^d C_{ij} e_i, \quad \Lambda(e_i \otimes e_j) = \sum_{k=1}^d B_{ij}^k e_k,
\]
with $C_{ij} \geq -1$ and certain relations between the coefficients. This is the setting of \cite{AnsFree-Meixner} (possibly with slightly different notation). Embedding $\mf{R}^d$ into $L^\infty[0,1]$ using functions constant on cubes, relates this example to the preceding one.

If all $C_{ij} > -1$, the inner product is positive definite. The operators are clearly bounded, and so extend to $\mc{F}_{\gamma, \phi}(\mc{B})$. Particular parameters for which the vacuum state is tracial were given in \cite{AnsFree-Meixner}.
\end{Example}

\begin{Example}
\label{Example:Scalar-gamma}
Choose $\mc{B}$, $\phi$, $\Lambda$ to be general, but let $\gamma = \psi - \phi$ be scalar-valued, where $\psi$ is positive semi-definite. Then the inner product simplifies to
\[
\ip{f_1 \otimes \ldots \otimes f_n}{g_1 \otimes \ldots \otimes g_k}_{\gamma,\phi}
= \delta_{n=k} \phi[g_n^\ast f_n] \prod_{i=1}^{n-1} \psi[g_i^\ast f_i],
\]
so that
\[
\mc{F}_{\gamma, \phi}(\mc{B}) \simeq \mf{C} \Omega \oplus \left( L^2(\mc{B}, \phi) \otimes \mc{F}(L^2(\mc{B}, \psi)) \right)
\]
with the standard inner product. A necessary condition for the vacuum state to be tracial is that $\psi$ is a multiple of $\phi$. For $\psi \neq \phi$ and $\Lambda = 0$, the vacuum state is not tracial. In \cite{Ans-Mashburn-Centered}, we investigate a choice of $\Lambda$ which does lead to a tracial vacuum state.
\end{Example}

\begin{Example}
\label{Example:Free-Poisson}
Let $\gamma = 0$ and $\Lambda(f \otimes g) = f g$. For any $(\mc{B}, \phi)$, the corresponding algebra of operators on the Fock space is the free (compound) Poisson algebra, a particular case of the constructions in the Appendix of \cite{AnsQLevy} (for $q = 0$) or Proposition~23 in \cite{Ans-Wick-products} (for the scalar case). See also \cite{GloSchurSpe}. In particular, for $f \in \mc{B}^{sa}$, $X(f)$ has a centered free compound Poisson distribution, and a centered free Poisson distribution if $f$ is a projection. Moreover, if $f \cdot g = g \cdot f = 0$ then $X(f)$ and $X(g)$ are free.
\end{Example}

We finish the paper with two structure results under the assumption that the vacuum state is tracial.

\begin{Prop}
Assume that $\mc{B}$ is unital, and the vacuum state is tracial. Suppose that $\Lambda(u \otimes v) = \Lambda(u) v$. Then $\Gamma^{alg}_{\gamma, \Lambda}(\mc{B}, \phi)$ is as in Example~\ref{Example:Bozejko}, with $\Lambda(u) = \lambda u$ for some central $\lambda$, and $\gamma[u] = \eta u$ for some central $u$.
\end{Prop}

\begin{proof}
If $\Lambda(u \otimes v) = \Lambda(u) v$, then conditions \eqref{Eq:a0-symmetric}, \eqref{Eq:Trace-star}, and \eqref{Eq:Trace-Associative} imply that $u \Lambda(v) = \Lambda(u) v$ and $u \gamma[y v] = \gamma[u y] v$. Therefore $\Lambda(u) = \lambda u = u \lambda$ for $\lambda = \Lambda(1)$, and $\gamma[u] = \eta u = u \eta$ for $\eta = \gamma[1]$.
\end{proof}

\begin{Thm}
Assume that $\mc{B}$ is unital, and the vacuum state is tracial. Suppose that $\gamma[f] = \eta f$ for $\eta$ central, and for some $M$, and all $f, g \in \mc{B}$,
\begin{equation}
\label{Eq:Hilbert-algebra}
\ip{\Lambda(f \otimes g)}{\Lambda(f \otimes g)} \leq M \norm{f}_{\mc{B}}^2 \ip{g}{g}.
\end{equation}
Then we have an orthogonal decomposition
\[
\mc{H} = (\mc{B}, \ip{\cdot}{\cdot}_\phi) = \mc{N} \oplus \mc{N}^\perp = \mc{Z} \oplus \mc{P} \oplus \mc{N}^\perp,
\]
such that
\begin{itemize}
\item
$\Gamma^{alg}_{\gamma, \Lambda}(\mc{N}^\perp, \phi)$ is as in Example~\ref{Example:Bozejko}, with $\Lambda(f \otimes g) = \lambda f g$ for some central $\lambda$.
\item
$\Gamma^{alg}_{\gamma, \Lambda}(\mc{Z}, \phi)$ is generated by free semicircular elements.
\item
$\Gamma^{alg}_{\gamma, \Lambda}(\mc{P}, \phi)$ is a free Poisson algebra as in Example~\ref{Example:Free-Poisson}, where the multiplication on $\mc{P}$ is given by $\Lambda$ (rather than inherited from $\mc{B}$).
\end{itemize}
Moreover, the subalgebras $\Gamma^{alg}_{\gamma, \Lambda}(\mc{N}^\perp, \phi)$, $\Gamma^{alg}_{\gamma, \Lambda}(\mc{Z}, \phi)$, and $\Gamma^{alg}_{\gamma, \Lambda}(\mc{P}, \phi)$ are free.
\end{Thm}

\begin{proof}
Denote by $\mc{H}$ the inner product space $\mc{B}$ with the involution and the inner product $\ip{f}{g} = \phi[g^\ast f]$ satisfying the relation
\[
\ip{f}{g} = \ip{g^\ast}{f^\ast}.
\]
Denote
\begin{equation}
\label{Eq:Lambda-product}
f \cdot g = \Lambda(f \otimes g).
\end{equation}

The notation is meant to suggest that we consider the binary operation $\Lambda$ as a multiplication on $\mc{H}$ (different from a multiplication it may have inherited from $\mc{B}$). Indeed, equations \eqref{Eq:Trace-star}, \eqref{Eq:Trace-Associative}, and the first condition in \eqref{Eq:a0-symmetric} say that
\[
(f \cdot g)^\ast = g^\ast \cdot f^\ast, \quad (f \cdot g) \cdot h = f \cdot (g \cdot h), \quad \ip{f \cdot g}{h} = \ip{g}{f^\ast \cdot h}.
\]
That is, $(\mc{H}, \cdot, \ast)$ is an associative star-algebra (linearity/distributivity is immediate), which is represented on $\mc{H}$ by left multiplication operators.

Second condition in equation~\eqref{Eq:a0-symmetric} reads
\begin{equation}
\label{Eq:Switch}
\eta g^\ast \Lambda(b \otimes f) = \eta \Lambda(b^\ast \otimes g)^\ast f = \eta \Lambda(g^\ast \otimes b) f.
\end{equation}
Therefore $\eta \Lambda(b \otimes f) = \eta \lambda b f$, where $\lambda = \Lambda(1 \otimes 1)$.

Let
\[
\mc{N} = \set{ f \in \mc{B} : \eta f = 0}.
\]
Then $\mc{N}$ is a self-adjoint subspace and an ideal. Restricted to $\mc{N}^\perp$, multiplication by $\eta$ is injective. Since by \eqref{Eq:Trace-star}, $\eta \lambda b g = \eta b g \lambda^\ast$, it follows that $\lambda$, restricted to $\mc{N}^\perp$, is self-adjoint and central. $\mc{N}^\perp$ is self-adjoint and a left ideal with respect to $\cdot$, so $\Gamma^{alg}_{\gamma, \Lambda}(\mc{N}^\perp, \phi)$ is a star-subalgebra.

Let $f_1, \ldots, f_n \in \mc{N} \cup \mc{N}^\perp$, with at least one element from $\mc{N}$. It follows from equation~\eqref{Eq:Switch} that $\mc{N}$ is also an ideal with respect to $\cdot$. Combined with the definition of $\mc{N}$, this implies that in the expansion for the joint free cumulant in Proposition~\ref{Prop:FreeCumulants}, elements of $\mc{N}$ can only appear in the (unique) outer block of $\pi$. Since each inner block is a multiple of $\eta$, it follows that in fact, $\pi$ has a single block so that
\begin{equation}
\label{Eq:Free-Cumulant-Inner}
R[X(f_1), \ldots, X(f_n)] = \ip{f_1}{f_2 \cdot \ldots \cdot f_n}.
\end{equation}
Since the vacuum state is tracial, the free cumulants are cyclically symmetric. So if
\[
f_1, \ldots, f_n \in \mc{N} \cup \mc{N}^\perp
\]
with at least one element from each of $\mc{N}, \mc{N}^\perp$, we may assume that $f_1 \in \mc{N}^\perp$ and $f_2 \cdot \ldots \cdot f_n \in \mc{N}$. Then their inner product is zero, which implies freeness by \eqref{Eq:Free-Cumulant-Inner}.

Next, denote
\[
\mc{Z} = \set{f \in \mc{N} : \forall g \in \mc{N}, g \cdot f = 0}
\]
and
\[
\mc{P} = \overline{\Span(\set{ f \cdot g : f, g \in \mc{N}})}.
\]
Clearly both of them are subspaces (and in fact ideals with respect to $\cdot$). Also, using the properties of $\cdot$ at the beginning of the proof, one sees that $\mc{Z}$ and $\mc{P}$ are orthogonal complements of each other, and $\mc{N} = \mc{Z} \oplus \mc{P}$. $\mc{P}$ is clearly self-adjoint, therefore so is $\mc{Z}$.  

For $f_1, \ldots, f_n \in \mc{Z} \cup \mc{P}$ with at least one of them in $\mc{Z}$, using the cyclic property again,
\[
R[X(f_1), \ldots, X(f_n)] = \ip{f_1}{f_2 \cdot \ldots \cdot f_n} = \delta_{n=2} \ip{f_1}{f_2}
\]
and is zero unless both $f_1, f_2 \in \mc{Z}$. It follows that for $f \in \mc{Z}^{sa}$, $X(f)$ are semicircular, with orthogonal $f$ corresponding to free $X(f)$, and are also free from $X(g)$ with $g \in \mc{P}$.

Under the boundedness assumption~\eqref{Eq:Hilbert-algebra}, $\mc{P}$ satisfies all the axioms of a Hilbert algebra (Chapters 5 and 6 of \cite{Dixmier-vN}). Then we can complete $\mc{P}$ to a von Neumann algebra represented on $\mc{N}$ (since it acts as zero on $\mc{Z}$, it is also represented on $\mc{P}$) by left multiplication. Moreover, we have a semi-finite trace $\psi$ on this von Neumann algebra which is finite on $\mc{P}$ and satisfies $\ip{f}{g} = \psi[g^\ast \cdot f]$. So in this case for self-adjoint elements and $n \geq 2$,
\[
R[X(f_1), \ldots, X(f_n)] = \psi[f_1 \cdot f_2 \cdot \ldots \cdot f_n].
\]
It follows that $\Gamma^{alg}_{\gamma, \Lambda}(\mc{P}, \phi)$ is the free compound Poisson algebra of $(\mc{P}, \psi)$.
\end{proof}

\def\cprime{$'$} \def\cprime{$'$}
\providecommand{\bysame}{\leavevmode\hbox to3em{\hrulefill}\thinspace}
\providecommand{\MR}{\relax\ifhmode\unskip\space\fi MR }
\providecommand{\MRhref}[2]{%
  \href{http://www.ams.org/mathscinet-getitem?mr=#1}{#2}
}
\providecommand{\href}[2]{#2}


\end{document}